\def\author#1{\gdef\autrun{\def\and{\unskip, }#1}\gdef\@author{#1}}
\def\subjclass#1{{\renewcommand{\thefootnote}{}%
\footnote{\emph{Mathematics Subject Classification (2010):} #1}}}
\newtheorem{theorem}{Theorem}[section]
\newtheorem{lemma}[theorem]{Lemma}
\newtheorem{definition}[theorem]{Definition}
\newtheorem{notation}[theorem]{Notation}
\newtheorem{proposition}[theorem]{Proposition}
\def\p{\varphi}
\begin{document}

\title{\textsc{Michael-Simon inequalities for $k$-th mean curvatures}}
\author{\textsc{Yi Wang\thanks{Department of Mathematics, Stanford University, Stanford, CA 94305 Email: wangyi@math.stanford.edu;
the research is partially supported by NSF grant DMS-1205350.}}}
\date{}

\maketitle

\subjclass{Primary 35J96; Secondary 52B60}


\begin{abstract}
This paper continues the study of Alexandrov-Fenchel inequalities for quermassintegrals for $k$-convex domains. It focuses on the application to the Michael-Simon type inequalities for $k$-curvature operators. The proof uses
optimal transport maps as a tool to relate curvature quantities defined on the boundary of a domain.
\end{abstract}

\section{Introduction}

The classical Michael-Simon inequality is the Sobolev inequality on immersed submanifolds.

\begin{theorem}\cite{MS} Let $i: M^n\to \mathbb{R}^{N}$ be an isometric immersion ($N>n$). Let $U$ be an open subset of $M$.
For a function $\p\in C^{\infty}_c(U)$, there exists a constant $C$, such that
\begin{equation}\label{ms0}
\displaystyle \left(\int_{M}|\p|^{\frac{n}{n-1}}d\mu_M\right)^{\frac{n-1}{n}}\leq C\int_{M} |\vec H|\cdot |\p|+|\nabla \p|dv_{M}.
\end{equation}
\end{theorem}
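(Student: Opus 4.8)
The plan is to run the classical Michael--Simon argument: establish a monotonicity formula at every point of $M$, then globalize it by a covering argument. Since $|\varphi|$ is again Lipschitz with $|\nabla|\varphi||=|\nabla\varphi|$ almost everywhere, and both sides of \eqref{ms0} depend only on $|\varphi|$, I may assume $\varphi\ge 0$. The only analytic input is the first variation (divergence) identity on the immersed submanifold: for every $X\in C^{1}_{c}(U;\mathbb{R}^{N})$,
\[
\int_{M}\operatorname{div}_{M}X\,d\mu_{M}\ =\ -\int_{M}\langle X,\vec H\rangle\,d\mu_{M},
\]
which one proves by splitting $X$ along $M$ into its tangential and normal parts: the tangential divergence of the tangential part integrates to zero because $X$ is compactly supported, while the tangential divergence of the normal part equals $-\langle X,\vec H\rangle$ by the definition of $\vec H$ as the trace of the second fundamental form. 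Applying this with $\varphi X$ in place of $X$ (using $\operatorname{div}_{M}(\varphi X)=\varphi\operatorname{div}_{M}X+\langle\nabla_{M}\varphi,X\rangle$) shows at once that the Radon measure $\nu:=\varphi\,d\mu_{M}$ satisfies $\int_{M}\operatorname{div}_{M}X\,d\nu=-\int_{M}\langle X,\varphi\vec H+\nabla_{M}\varphi\rangle\,d\mu_{M}$ for all such $X$.

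Now fix $x_{0}\in\operatorname{spt}\varphi$ and put $r(x)=|x-x_{0}|$. Feeding the radial vector fields $X=\gamma(r)\,(x-x_{0})$ — with $\gamma$ a smooth cutoff approximating $\max(r,\sigma)^{-n}$ truncated to $\{r<\tau\}$ — into the last identity, and using $\operatorname{div}_{M}(x-x_{0})=n$, the identity $\langle\nabla_{M}r,x-x_{0}\rangle=r\,|\nabla_{M}r|^{2}$, and $|\nabla_{M}r|\le 1$, one obtains after the standard manipulation (in which $n\gamma$ largely cancels $r\gamma'$ on $\{\sigma<r<\tau\}$, leaving a nonnegative reflection term $|(x-x_{0})^{\perp}|^{2}r^{-n-2}$) the monotonicity formula: $\sigma\mapsto\sigma^{-n}\nu(B_{\sigma}(x_{0}))$ is nondecreasing up to an additive correction whose total variation on $(0,\tau)$ is at most $\tfrac1n\int_{B_{\tau}(x_{0})}|\varphi\vec H+\nabla_{M}\varphi|\,r^{1-n}\,d\mu_{M}$. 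Because the immersion is locally an embedding, $\sigma^{-n}\nu(B_{\sigma}(x_{0}))\to\omega_{n}\varphi(x_{0})$ as $\sigma\to 0$ (with $\omega_{n}=|B^{n}_{1}|$), and the weight $r^{1-n}$ makes that correction integrable down to $r=0$; letting $\sigma\to 0$ therefore gives, for every $\tau>0$ and every $x_{0}\in\operatorname{spt}\varphi$,
\[
\omega_{n}\,\varphi(x_{0})\ \le\ \tau^{-n}\!\int_{M\cap B_{\tau}(x_{0})}\!\varphi\,d\mu_{M}\ +\ \frac1n\int_{M\cap B_{\tau}(x_{0})}\!\big(|\nabla_{M}\varphi|+\varphi\,|\vec H|\big)\,r^{1-n}\,d\mu_{M}.
\]

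It remains to convert this family of pointwise estimates into \eqref{ms0}, which is the delicate part and the core of \cite{MS}. I would apply the bound with $\varphi$ replaced by $\psi:=\varphi^{n/(n-1)}$ (so that $|\nabla_{M}\psi|=\tfrac{n}{n-1}\varphi^{1/(n-1)}|\nabla_{M}\varphi|$), and for each level $t>0$ and each $x_{0}\in\{\varphi>t\}$ pick a radius $\tau(x_{0})$ balancing the two terms on the right — possible since the first term decreases continuously from $\omega_{n}\psi(x_{0})$ at $\tau=0$ to $0$ as $\tau\to\infty$, the measure $\psi\,d\mu_{M}$ being finite — so that $\psi(x_{0})$ is controlled, up to a dimensional factor, by the error integral $\int_{B_{\tau(x_{0})}(x_{0})}(|\nabla_{M}\psi|+\psi|\vec H|)\,r^{1-n}\,d\mu_{M}$ over that ball alone. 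A Vitali/Besicovitch covering of $\{\varphi>t\}$ by such balls, with the overlaps absorbed by the Besicovitch constant, converts the local bounds into an estimate of isoperimetric type for the level set $\{\varphi>t\}$; integrating in $t$ against the layer-cake identity $\int_{M}\psi\,d\mu_{M}=\tfrac{n}{n-1}\int_{0}^{\infty}t^{1/(n-1)}\mu_{M}(\{\varphi>t\})\,dt$ and using H\"older to reabsorb the factor $\varphi^{1/(n-1)}=\big(\varphi^{n/(n-1)}\big)^{1/n}$ against a power of $\int_{M}\psi\,d\mu_{M}$ produces \eqref{ms0} with a constant $C=C(n)$. I expect this last step to be the main obstacle: making the covering and summation reproduce the precise exponent $n/(n-1)$ — which expresses that the weight $r^{1-n}\,d\mu_{M}$ is $(n-1)$-dimensionally homogeneous against the factor $\tau^{-n}$ — and in \cite{MS} this is handled through a short bootstrap starting from a weaker preliminary bound.
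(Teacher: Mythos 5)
Your statement is cited from \cite{MS} in the paper, not proved there; what you have written is therefore a reconstruction of the Michael--Simon argument, not something to compare against an in-paper proof. The first stages --- reduction to $\varphi\ge 0$, the tangential divergence identity (with $\vec H$ arising as the trace of the second fundamental form acting on the normal component of the test field), the radial monotonicity obtained from $X=\gamma(r)(x-x_0)$ with $\mathrm{div}_M(x-x_0)=n$ and $\langle \nabla_M r, x-x_0\rangle=r|\nabla_M r|^2$, and the resulting mean-value inequality with the $r^{1-n}$ weight --- are all correctly sketched.

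The final covering-and-summation step is the real content of \cite{MS}, and there your argument stays at the level of intent. Two places where it is not yet a proof: (i) after choosing $\tau(x_0)$ and covering a super-level set, each ball's error term still carries the singular weight $r^{1-n}$, and passing from a bounded-overlap family of such estimates to a single bound by $C\int_M(|\nabla_M\varphi|+\varphi|\vec H|)\,d\mu_M$ is precisely the covering lemma of Michael and Simon --- a nontrivial argument in its own right, not a generic Vitali or Besicovitch application; (ii) the proposed closing move, applying the mean-value bound with $\psi=\varphi^{n/(n-1)}$ and then H\"older-ing to reabsorb, only works if the power of $\int_M\psi\,d\mu_M$ it produces on the right is strictly less than one, and that exponent check is not carried out. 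You correctly identify this step as the main obstacle; as written it remains open, and it is exactly the part the cited paper supplies.
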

In the special case when we take $\p\equiv 1$, Michael-Simon inequality gives an inequality between the area of the boundary
and the integral of its mean curvature.
In this note, we derive a natural generalization of (\ref{ms0}) in which we establish inequalities between fully nonlinear curvature quantities $\sigma_{k-1}(L)$ and $\sigma_{k}(L)$ if the hypersurfaces $M$ is $(k+1)$-convex, where $\sigma_k(L)$ denotes the $k$-th elementary symmetric function of the second fundamental form $L$.

\begin{theorem}\label{MS}Let $i: M^n\to \mathbb{R}^{n+1}$ be an isometric immersion. Let $U$ be an open subset of
 $M$ and $\p$ be a $C^{\infty}_c(U)$ function. For $k=2,..., n-1$, if $M$ is $(k+1)$-convex,
 then for any $1\leq l\leq k-1$, there exists a constant $C$ depending only on $n$ $k$ and $l$, such that
$$\displaystyle\left( \int_{M} \sigma_{l}(L)|\p|^{\frac{n-l}{n-k}}d\mu_M\right)^{\frac{n-k}{n-l}}\leq C
 \int_{M} (\sigma_{k}(L)|\p|+\sigma_{k-1}(L)|\nabla \p|+,...,+|\nabla^k \p| )d\mu_M.\\$$
If $k=n$, then the inequality holds when $M$ is $n$-convex. If $k=1$, then the inequality holds when $M$ is $1$-convex.
($k=1$ $l=0$ case is the Michael-Simon inequality.)
\end{theorem}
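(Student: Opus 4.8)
The plan is to run the optimal-transportation proof of the Sobolev and Michael--Simon inequalities --- in the spirit of Cordero-Erausquin--Nazaret--Villani, and, for submanifolds, of Castillon --- while carrying the weight $\varphi$ along, so that the elementary symmetric functions $\sigma_j(L)$ enter through the G\r{a}rding--Maclaurin inequalities and the divergence-free property of the Newton tensors. (For $\varphi\equiv 1$ the inequality is precisely the Alexandrov--Fenchel inequality for $(k+1)$-convex domains, which fixes the scaling and the exponents.) As a preliminary reduction we may assume $\varphi\ge 0$ (replace $\varphi$ by $|\varphi|$, mollify, pass to the limit). The hypothesis that $M$ is $(k+1)$-convex means that at each point the principal curvatures lie in the G\r{a}rding cone $\Gamma_{k+1}=\{\lambda:\sigma_1(\lambda)>0,\dots,\sigma_{k+1}(\lambda)>0\}$, so $\sigma_j(L)>0$ on $M$ for $0\le j\le k+1$ and all the algebraic and divergence identities for $\sigma_1,\dots,\sigma_{k+1}$ are at our disposal. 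It suffices to prove, for the stated range of $l$, the sharper inequality
$$\Big(\int_M\sigma_l(L)\,\varphi^{\frac{n-l}{n-k}}\,d\mu_M\Big)^{\frac{n-k}{n-l}}\ \le\ C\int_M \sum_{j=0}^{k-l}\sigma_{k-j}(L)\,|\nabla^j\varphi|\ d\mu_M,$$
since the right-hand side in the theorem is obtained from this by adjoining the remaining nonnegative terms $\sigma_{k-j}(L)|\nabla^j\varphi|$ with $k-l<j\le k$. By homogeneity in $\varphi$ we may normalize so that the left-hand integral equals $1$, the degenerate case being trivial.

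I would first treat the base case $l=k-1$, where the displayed inequality becomes $\big(\int_M\sigma_{k-1}(L)\varphi^{\frac{n-k+1}{n-k}}d\mu_M\big)^{\frac{n-k}{n-k+1}}\le C\int_M(\sigma_k(L)\varphi+\sigma_{k-1}(L)|\nabla\varphi|)d\mu_M$. Let $d\theta=\sigma_{k-1}(L)\varphi^{(n-k+1)/(n-k)}d\mu_M$, which after the normalization is a probability measure on $M$, and fix a reference probability measure $d\lambda$ supported in a bounded region of $\mathbb R^n$. Let $T$ be the optimal transport map pushing $\theta$ onto $\lambda$, realized as the gradient map on $M$ of a $c$-concave potential $\psi$, in the sense appropriate to a submanifold of Euclidean space, so that a Monge--Amp\`ere / change-of-variables identity holds $\mu_M$-almost everywhere, with the Jacobian governed by a symmetric endomorphism built from $\nabla^2\psi$ and the second fundamental form. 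Since $(k+1)$-convexity keeps this endomorphism in the cone $\Gamma_k$, taking the appropriate root of the Monge--Amp\`ere equation and invoking the G\r{a}rding--Maclaurin inequality converts the Jacobian into a first-order, divergence-form expression in $\psi$ with coefficients built from the Newton tensor $P_{k-1}(L)$. Integrating this over the closed manifold $M$ against $\varphi$ raised to the power dictated by the exponents, integrating by parts once, and distributing the derivative by the Leibniz rule, the part falling on $\varphi$ produces $\sigma_{k-1}(L)|\nabla\varphi|$, the part on the curvature coefficients is absorbed by the divergence-free identity $\operatorname{div}_M P_{k-1}(L)=0$ (the Codazzi equation, the ambient space being flat), the part on the transport map is controlled since $T$ takes values in a bounded region, and the curvature correction to the Jacobian contributes $\sigma_k(L)\varphi$; a concluding H\"older inequality restores the exponent $(n-k)/(n-k+1)$.

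For $l<k-1$ I would induct on the gap $k-l$. Apply the base case to $\psi:=\varphi^{b}$, with $b=\frac{n-l-1}{n-k}$ chosen so that $\psi^{(n-l)/(n-l-1)}=\varphi^{(n-l)/(n-k)}$; then raise the resulting inequality to the power $p=\frac{n-k}{n-l-1}$, which is $<1$ since $l<k-1$, and use subadditivity of $t\mapsto t^{p}$ to break the estimate into two pieces. The first is $\big(\int_M\sigma_{l+1}(L)\varphi^{(n-l-1)/(n-k)}d\mu_M\big)^{\frac{n-k}{n-l-1}}$, to which the inductive hypothesis --- the $(l+1,k)$-case, of gap $k-l-1$ --- applies verbatim and delivers the terms $\sigma_{k-j}(L)|\nabla^j\varphi|$ for $0\le j\le k-l-1$. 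The second is a cross term of the shape $\big(\int_M\sigma_l(L)\varphi^{(k-l-1)/(n-k)}|\nabla\varphi|\,d\mu_M\big)^{\frac{n-k}{n-l-1}}$, in which the exponent of $\varphi$ is nonnegative, and one dispatches it by a further integration by parts on $M$ --- transferring the derivative off $\varphi$ so as to produce the still-missing term of order $k-l$, again using the divergence-free Newton tensors to control the curvature factors. Collecting everything and absorbing constants (depending only on $n,k,l$) gives the sharper inequality, hence the theorem. The case $k=n$ is the special situation in which the relevant endomorphism lies in $\Gamma_n$, so the Jacobian is a genuine determinant and, in the appropriate limiting form of the exponents, the argument reduces to the classical Brenier map with the arithmetic--geometric mean inequality; the case $k=1$ has $\sigma_1(L)$ equal, up to a constant, to the scalar mean curvature, and reproduces the Michael--Simon inequality, the $l=0$ instance being the classical one.

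The step I expect to be the main obstacle is making the transport argument rigorous and quantitative. The optimal map $T$ on a submanifold is in general only of bounded variation, so the Monge--Amp\`ere change of variables holds only $\mu_M$-almost everywhere after a singular part is discarded; one must check that the relevant symmetric endomorphism stays in $\Gamma_k$ $\mu_M$-a.e. so that the G\r{a}rding--Maclaurin inequality applies with the correct sign, and the integrations by parts must be legitimized --- most cleanly by first establishing everything for smooth, strictly $(k+1)$-convex data and smooth strictly positive $\varphi$, and then approximating. A secondary, essentially combinatorial, difficulty is tracking the Newton-tensor integrations by parts and the induction so that the constant depends only on $n$, $k$, and $l$; once the base-case inequality and the divergence identity for the Newton tensor are in hand, this is routine.
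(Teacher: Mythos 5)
Your proposed ``sharper'' reduction --- keeping only the terms $\sigma_{k-j}(L)\,|\nabla^j\varphi|$ for $0\le j\le k-l$ on the right-hand side --- is actually false, and this invalidates the base case and hence the entire plan. A scaling test exposes it: take $M$ to be a round sphere so that every $\sigma_j(L)$ is a positive constant, and let $\varphi$ be a bump of height one supported in a geodesic ball of radius $\varepsilon$. In your base case $l=k-1$ with $k=2$, the left-hand side
$\bigl(\int_M\sigma_1(L)\varphi^{(n-1)/(n-2)}\,d\mu_M\bigr)^{(n-2)/(n-1)}$
scales like $\varepsilon^{\,n(n-2)/(n-1)}$, while your proposed two-term right-hand side
$\int_M\bigl(\sigma_2(L)\varphi+\sigma_1(L)|\nabla\varphi|\bigr)\,d\mu_M$
scales like $\varepsilon^{\,n-1}$. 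Since $n(n-2)/(n-1)<n-1$ for all $n\ge 3$, the ratio of the two sides blows up as $\varepsilon\to 0$, so the inequality fails. The term you drop, $|\nabla^2\varphi|$, contributes $\sim\varepsilon^{\,n-2}$ and is exactly what restores consistent scaling; this is why the theorem retains the full list $\sigma_k(L)|\varphi|,\sigma_{k-1}(L)|\nabla\varphi|,\dots,|\nabla^k\varphi|$, and why the paper's Proposition \ref{a} needs all $k+1$ terms on its right-hand side.

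The mechanism behind the failure is the real gap in your argument. The claim that, after taking roots of the Monge--Amp\`ere equation and invoking G\r{a}rding--Maclaurin, the Jacobian becomes a ``first-order, divergence-form expression'' in the potential is incorrect: what actually emerges --- here, exactly as in the paper's inequality (\ref{LHSRHS}) and its Proposition \ref{a} --- is $\sigma_k(D^2v+aL)$, which is degree $k$ in $D^2v$. Each integration by parts lowers the $D^2v$-degree by one and, by the Leibniz rule, deposits another derivative on $\varphi$. The paper handles this cascade with a multi-layer induction on the number $i_0$ of $D^2v$-slots in $\Sigma_k(\overbrace{D^2v,\dots}^{i_0},L,\dots)$, splitting into the $I$-, $J$-, $K$- and $N$-type terms of Section~5; in particular the $N$-type term is bounded by $\sum_s I_{k-1,s}(|\nabla\varphi|)$ --- the same shape of integral with $k\mapsto k-1$ and $\varphi\mapsto|\nabla\varphi|$ --- and iterating this recursion is precisely what generates $|\nabla\varphi|,\dots,|\nabla^k\varphi|$. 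No single application of Maclaurin plus one integration by parts can collapse this. A secondary issue with your induction step: the cross term $\bigl(\int_M\sigma_l(L)\varphi^{(k-l-1)/(n-k)}|\nabla\varphi|\,d\mu_M\bigr)^{(n-k)/(n-l-1)}$ carries an exponent strictly less than one outside the integral, so it is not of the form $\int_M(\cdots)\,d\mu_M$ appearing on the theorem's right-hand side, and ``a further integration by parts'' does not remove that outer power.
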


Theorem \ref{MS} generalizes previous works \cite{Chang-Wang2010} \cite{CW}
on the Alexandrov-Fenchel inequality for quermassintegrals of $k+1$-convex domains.
The proof of Theorem \ref{MS} follows from the same outline as
that of Theorem 1.3 in \cite{CW}; with the added complication
of the present of weights $\p$ and its higher order derivatives.
The main technical part lies in the proof of Proposition \ref{a}. We reduce the proof of Proposition \ref{a}
into four types of estimates, which are defined to be the $I$-type, the $J$-type, the $K$-type and the $N$-type estimate (in Section 5). Among them, $K$-type estimate is quite different from the one in \cite{CW}, and $N$-type estimate is new.
In the proof we will briefly go through the $I$-type estimate and the $J$-type estimate which are similar to those in \cite{CW}; we then focus on the $K$-type estimate and the $N$-type estimate, especially the interplay between them.

The organization of this paper is as follows. In Section 2, we will recall some preliminary facts on elementary symmetric functions and curvature properties of embedded hypersurfaces.
In Section 3, we will demonstrate the method of optimal transport and reduce the proof of Theorem \ref{MS} to the technical proposition (Proposition \ref{a}).
In Section 4, we will present the proof of Proposition \ref{a}
for the special cases $k=2$. In Section 5, we will prove Proposition \ref{a} for general $k$ by a delicate induction argument.

We remark that it is an open question to prove Michael-Simon inequality (\ref{ms0}) with sharp constant.
The Michael-Simon inequality for higher order curvatures we derive in this paper does not yield any
sharp constants either.

{\bf Acknowledgments:}
The author would like to thank Professor Sun-Yung Alice Chang for many discussions at the early developing stage of the work.

\section{Preliminaries}

\subsection{\textbf{$\Gamma_k^+$}cone}
In this subsection, we will describe some properties of the $k$-th elementary symmetric function $\sigma_k$ and its associated convex cone.
\subsubsection{Definitions and basic properties}\begin{definition} The $k$-th elementary symmetric function for
 $\lambda=(\lambda_1,...,\lambda_n)\in \mathbb{R}^n$ is
$$\sigma_{k}(\lambda):=\sum_{i_1<...<i_k}\lambda_{i_1}\cdots \lambda_{i_k}.$$
\end{definition}
The elementary symmetric functions are special cases of hyperbolic polynomials introduced by Garding
 \cite{Garding}, which enjoy the following properties in their associated positive cones.
\begin{definition}\label{cone}
$$\Gamma_{k}^{+}:=\{\lambda\in \mathbb{R}^n|\mbox{the connected component of } \sigma_{k}(\lambda)>0
\mbox{ which contains the identity}=(1,...,1)\}$$ is called the positive $k$-cone.\\
Equivalently, $$\Gamma_k^+=\{\lambda\in \mathbb{R}^n|\ \sigma_{1}(\lambda)>0,...,\sigma_{k}(\lambda)>0\}.$$
\end{definition}
\noindent In particular, $\Gamma_{n}^+$ is the positive cone $$\{\lambda\in \mathbb{R}^n|\ \lambda_1> 0,..., \lambda_n> 0  \},$$
and $\Gamma_{1}^+$ is the half space $\{\lambda\in \mathbb{R}^n| \lambda_1+\cdots +\lambda_n> 0 \}$.
It is also obvious from Definition \ref{cone} that $\Gamma_k^+$ is an open convex cone and that
$$\Gamma_{n}^+ \subset \Gamma_{n-1}^+\cdots \subset\Gamma_{1}^+.$$

\noindent By Garding's theory of hyperbolic polynomials \cite{Garding}, one concludes that
$\sigma_{k}^{\frac{1}{k}}(\cdot)$ and $(\frac{\sigma_{k}(\cdot)}{\sigma_{l}(\cdot)})^{\frac{1}{k-l}}$ ($k>l$) are concave functions in $\Gamma_{k}^{+}$.

\begin{definition}A symmetric matrix $A$ is in $\tilde{\Gamma}_{k}^+$ cone, if its
eigenvalues $$\lambda(A)=(\lambda_1(A),...,\lambda_n(A))\in  \Gamma_{k}^+.$$
\end{definition}
When there is no confusion, we will denote $\tilde{\Gamma}_{k}^+$ by $\Gamma_{k}^+$ and
$\sigma_{k}(\lambda(A))$ by $\sigma_{k}(A)$ for simplicity.
An equivalent definition of $\sigma_k(A)$ is
$$\sigma_{k}(A):=\frac{1}{k!}\delta^{i_1,...,i_k}_{j_1,...,j_k}A_{i_1j_1} \cdots A_{i_kj_k}.$$
The Newton transformation tensor is defined as
\begin{equation}\label{Newton}[T_{k}]_{ij}(A_1,...,A_k):=\frac{1}{k!}
\delta^{i,i_1,...,i_k}_{j,j_1,...,j_k}{(A_1)}_{i_1j_1} \cdots {(A_k)}_{i_kj_k}.\end{equation}
\begin{definition}\label{2.3}
With the notion of $[T_{k}]_{ij}$, one may define the polarization of $\sigma_k$ by
\begin{equation}\Sigma_{k}(A_1,...,A_k):={A_1}_{ij}\cdot[T_{k-1}]_{ij}(A_2,...,A_k)=
\frac{1}{(k-1)!}\delta^{i_1,...,i_k}_{j_1,...,j_k}{(A_1)}_{i_1j_1} \cdots {(A_k)}_{i_kj_k}.\end{equation}
\end{definition}
We remark here that $\Sigma_{k}(A,...,A)$ and $\sigma_k(A)$ only differs by a multiplicative constant.
$$\sigma_{k}(A)=\frac{1}{k}\Sigma_{k}(A,...,A).$$
Therefore it is called the polarization of $\sigma_k$.
\begin{notation}
When some components are the same, we adopt the notational convention that
$$\Sigma_{k}(\overbrace{B,...,B}^{l},C,...,C):=\Sigma_{k}(\overbrace{B,...,B}^{l},\overbrace{C,...,C}^{k-l}),$$
and
$$[T_{k}]_{ij}(\overbrace{B,...,B}^{l},C,...,C):=[T_{k}]_{ij}(\overbrace{B,...,B}^{l},\overbrace{C,...,C}^{k-l}).$$
Also for simplicity, we denote
$$[T_{k}]_{ij}(A):=[T_{k}]_{ij}(\overbrace{A,...,A}^{k}).$$
\end{notation}
Some relations between the Newton transformation tensor $T_{k}$ and $\sigma_k$ are listed below.
For any symmetric matrix $A$, if we denote the trace of a matrix by $Tr(\cdot)$, then
\begin{equation}\label{Trace1}\sigma_{k}(A)=\frac{1}{n-k}Tr([T_{k}]_{ij})(A),\end{equation}
and
\begin{equation}\label{Trace2}\sigma_{k+1}(A)=\frac{1}{k+1}Tr([T_{k}]_{im}(A)\cdot A_{mj} ).\end{equation}
On the other hand, one can write $[T_k]_{ij}$ in terms of $\sigma_k$ by the formula
$$[T_{k-1}]_{ij}(A)=\frac{\partial \sigma_{k}(A)}{\partial A_{ij}},$$
and
\begin{equation}\label{T}[T_{k}]_{ij}(A)=\sigma_{k}(A)\delta_{ij}-[T_{k-1}]_{im}(A)A_{mj}.
\end{equation}
\noindent This last formula implies the following fact which we will repeatedly use later in our proof.
\begin{lemma}\label{2.2Tk}Suppose $B$ and $C$ are two symmetric matrices, then
\begin{equation}\label{T_k}
\begin{array}{lcl}&&[T_{k-1}]_{im}(\overbrace{B,...,B}^{l},C,...,C)C_{mj}\\
&=&\displaystyle \frac{C_{k}^{l}}{kC_{k-1}^{l}} \cdot\Sigma_{k}(\overbrace{B,...,B}
^{l},C,...,C)\delta_{ij} -\frac{C_{k}^{l}}{C_{k-1}^{l}}\cdot[T_{k}]_{ij}(\overbrace{B,...,B}^{l},C,...,C)\\
&&\displaystyle -\frac{C_{k-1}^{l-1}}{C_{k-1}^{l}}\cdot[T_{k-1}]_{im}(\overbrace{B,...,B}^{l-1},C,...,C)B_{mj}.\\
\end{array}
\end{equation}
\end{lemma}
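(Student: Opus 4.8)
\textbf{Proof proposal for Lemma \ref{2.2Tk}.}

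The plan is to derive \eqref{T_k} purely combinatorially from the definition \eqref{Newton} of the Newton transformation tensor, by expanding the product $[T_{k-1}]_{im}(\overbrace{B,...,B}^{l},C,...,C)C_{mj}$ and regrouping the result into the generalized Kronecker delta symbols that correspond to the three terms on the right-hand side. First I would write out
$$[T_{k-1}]_{im}(\overbrace{B,...,B}^{l},C,...,C)C_{mj}=\frac{1}{(k-1)!}\delta^{i,i_1,...,i_{k-1}}_{m,j_1,...,j_{k-1}}{(B)}_{i_1j_1}\cdots{(B)}_{i_lj_l}{(C)}_{i_{l+1}j_{l+1}}\cdots{(C)}_{i_{k-1}j_{k-1}}C_{mj},$$
where there are $l$ copies of $B$ and $k-1-l$ copies of $C$ among the first $k-1$ factors, and then think of the final $C_{mj}$ as promoting the index $m$ to play a symmetric role alongside the $C$-indices. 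The key identity to exploit is the antisymmetry of the generalized Kronecker symbol: if one fixes an index (say $i$) in the top row, then contracting the remaining antisymmetrized block against a symmetric matrix and one extra copy of that matrix is exactly what relates $[T_{k-1}]\cdot C$ to $[T_k]$ and to $\Sigma_k\cdot\delta$. This is precisely the polarization of the scalar identity \eqref{T}, namely $[T_{k}]_{ij}(A)=\sigma_{k}(A)\delta_{ij}-[T_{k-1}]_{im}(A)A_{mj}$; here we need the version in which the matrices are not all equal and we differentiate/polarize only in the $C$-slots.

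The cleanest route, I expect, is to start from the polarization of \eqref{T} written symmetrically in $k$ matrix arguments and then specialize $l$ of them to $B$ and $k-l$ of them to $C$, carefully tracking how many ways each monomial arises — this is where all the binomial coefficients $C_k^l$, $C_{k-1}^l$, $C_{k-1}^{l-1}$ enter. Concretely: the fully polarized identity reads $[T_k]_{ij}(A_1,\dots,A_k)=\tfrac1k\Sigma_k(A_1,\dots,A_k)\delta_{ij}-\tfrac1k\sum_{s=1}^k[T_{k-1}]_{im}(A_1,\dots,\widehat{A_s},\dots,A_k)(A_s)_{mj}$, where the hat means omission. Setting $l$ of the arguments equal to $B$ and $k-l$ equal to $C$ collapses the sum over $s$ into two groups: the $k-l$ terms where $A_s=C$ (each giving $[T_{k-1}](\overbrace{B,...,B}^{l},C,...,C)C_{mj}$ with $k-1-l$ copies of $C$) and the $l$ terms where $A_s=B$ (each giving $[T_{k-1}](\overbrace{B,...,B}^{l-1},C,...,C)B_{mj}$). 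Solving the resulting linear relation for the term $[T_{k-1}]_{im}(\overbrace{B,...,B}^{l},C,...,C)C_{mj}$ and matching the overcounting factors (the number of ways to choose which of the $k$ polarization slots are the $l$ $B$'s versus the $k-l$ $C$'s, relative to the analogous count at level $k-1$) produces exactly the coefficients $\frac{C_k^l}{kC_{k-1}^l}$, $\frac{C_k^l}{C_{k-1}^l}$, and $\frac{C_{k-1}^{l-1}}{C_{k-1}^l}$ displayed in \eqref{T_k}.

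The main obstacle will be the bookkeeping of these multiplicities: one must be scrupulous about the difference between "ordered" polarization arguments (as in the definition of $\Sigma_k$ and $[T_k]$) and the symmetrized notation introduced in the Notation block, since $\Sigma_k(\overbrace{B,...,B}^{l},C,...,C)$ and $[T_k]_{ij}(\overbrace{B,...,B}^{l},C,...,C)$ already have a built-in normalization. I would double-check the final coefficients by testing the two extreme cases $l=0$ (no $B$'s, which must reduce to the pure identity \eqref{T} for $C$) and $l=k-1$ (only one $C$ among the $C$-slots, isolating the $B$-term), and also by a trace check: contracting $i=j$ in \eqref{T_k} and using \eqref{Trace1}–\eqref{Trace2} should give a consistent scalar identity among $\Sigma_k$-type quantities. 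Once those sanity checks pass, the lemma follows with no further analytic input — it is a pure identity of polynomial tensor expressions valid for all symmetric $B$, $C$.
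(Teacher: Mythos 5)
Your proposal is correct and follows exactly the route the paper indicates for the (omitted) proof: polarize identity \eqref{T} using the multilinearity of $[T_k]$, $\Sigma_k$, specialize the $k$ slots to $l$ copies of $B$ and $k-l$ copies of $C$, and solve for $[T_{k-1}]_{im}(\overbrace{B,\dots,B}^{l},C,\dots,C)C_{mj}$; the resulting coefficients $\frac{1}{k-l}$, $\frac{k}{k-l}$, $\frac{l}{k-l}$ agree with $\frac{C_k^l}{kC_{k-1}^l}$, $\frac{C_k^l}{C_{k-1}^l}$, $\frac{C_{k-1}^{l-1}}{C_{k-1}^l}$. The sanity checks you propose ($l=0$ recovering \eqref{T}, and the trace check against \eqref{Trace1}--\eqref{Trace2}) are sound.
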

\noindent We omit the proof here since it is quite straightforward by formula (\ref{T}) and the multi-linearity of $[T_{k}]$ and $\sigma_k$. One can also refer to Lemma 2.7
in \cite{CW} for a complete proof.\\

We finish this section by listing some basic inequalities based on the Garding's theory of hyperbolic polynomials \cite{Garding},
which we will use in the present paper.\\

(i) if $\lambda\in \Gamma_k^+$, then
$$\frac{\partial \sigma_{k}(\lambda)}{\partial \lambda_{i}}>0, \mbox{ for } i=1,...,n;$$

(ii) if $A_1,...,A_{k}\in \Gamma_{k+1}^+$, then $([T_{k}]_{ij})$ is a positive matrix, i.e.\\
 $$[T_{k}]_{ij}(A_1,...,A_k)> 0;$$

(iii) if $A_1,...,A_{k}\in \Gamma_k^+$, then
 $$ \Sigma_{k}(A_1,...,A_k)> 0;$$

(iv) if $A-B\in \Gamma_k^+$ and $A_2,...,A_k\in \Gamma_k^+ $, then
$$ \Sigma_{k}(B,A_1...,A_k)<  \Sigma_{k}(A,A_2,...,A_k).$$



Finally, we recall two technical lemmas regarding the derivative of the Newton transformation tensor $T_{k}$.

\begin{lemma}\label{divT}
Let $L$ denote the second fundamental form of the hypersurface $M^n\hookrightarrow \mathbb{R}^{n+1}$. Let $[T_{k}]_{ij}(L)$ be the Newton transform tensor of $L$.
Then the divergence of $[T_{k}]_{ij}(L)$ is equal to 0, i.e.
\begin{equation}
([T_{k}]_{ij}(L))_i=0 \quad \mbox{for each $j$}.
\end{equation}
\end{lemma}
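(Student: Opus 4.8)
The plan is to prove the classical divergence-free property of Newton transformation tensors, which is a well-known consequence of the Codazzi equations for hypersurfaces in a space form (here Euclidean space). First I would recall the Codazzi equation: since $M^n \hookrightarrow \mathbb{R}^{n+1}$, the second fundamental form satisfies $\nabla_i L_{jk} = \nabla_j L_{ik}$, i.e. $L_{ij;k}$ is totally symmetric in all three indices. This is the only geometric input needed; everything else is algebraic manipulation of the antisymmetrized Kronecker delta.

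Next I would write $[T_k]_{ij}(L) = \frac{1}{k!}\delta^{i,i_1,\dots,i_k}_{j,j_1,\dots,j_k} L_{i_1 j_1}\cdots L_{i_k j_k}$ using the generalized Kronecker delta, and compute the covariant divergence by taking $\nabla_i$ and contracting. By the product rule, $([T_k]_{ij}(L))_i = \frac{1}{k!}\delta^{i,i_1,\dots,i_k}_{j,j_1,\dots,j_k} \sum_{a=1}^{k} L_{i_1 j_1}\cdots (\nabla_i L_{i_a j_a})\cdots L_{i_k j_k}$. By symmetry of the summand in the factors, all $k$ terms are equal, so this is $\frac{1}{(k-1)!}\delta^{i,i_1,\dots,i_k}_{j,j_1,\dots,j_k} (\nabla_i L_{i_1 j_1}) L_{i_2 j_2}\cdots L_{i_k j_k}$. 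The key step is then to observe that $\nabla_i L_{i_1 j_1}$ is symmetric in the pair of indices $(i, i_1)$ by the Codazzi equation, while the Kronecker delta $\delta^{i,i_1,\dots,i_k}_{j,j_1,\dots,j_k}$ is antisymmetric under exchanging the upper indices $i$ and $i_1$. Contracting a symmetric tensor against an antisymmetric one over the same pair of indices yields zero, which gives $([T_k]_{ij}(L))_i = 0$ for each $j$.

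The only mild subtlety — and the step I would expect to be the main (though still minor) obstacle — is justifying carefully that the covariant derivative can be moved past the Kronecker delta symbol (which is parallel) and that the Codazzi equation indeed holds for hypersurfaces in $\mathbb{R}^{n+1}$ in the form $\nabla_i L_{jk} = \nabla_j L_{ik}$; in a general ambient manifold there would be a curvature correction term from the Gauss equation, but in flat Euclidean space this term vanishes. I would also remark that the same symmetry argument, applied to the polarized tensor $[T_k]_{ij}(L,\dots,L,R)$ with one entry replaced by another Codazzi tensor, shows divergence-freeness persists under polarization with the second fundamental form. I would note that this lemma, like Lemma \ref{2.2Tk}, is standard and can also be found in \cite{CW} (or in Reilly's classical work on Newton transformations), so in the paper itself the proof can be stated concisely in a few lines.
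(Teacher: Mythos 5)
Your proof is correct and takes essentially the same approach the paper indicates: the paper states that the lemma follows from the Codazzi equation together with properties of $[T_k]$ and refers to Lemma 5.1 of \cite{CW} for details, and your argument (expressing $[T_k]_{ij}$ via the generalized Kronecker delta, applying the product rule, and contracting the Codazzi-symmetric tensor $\nabla_i L_{i_1 j_1}$ against the delta which is antisymmetric in the upper pair $(i,i_1)$) is precisely the standard realization of that outline.
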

\noindent The proof uses the Codazzi equation
\begin{equation}
\label{Codazzi}
L_{ij,k}=L_{ik,j}
\end{equation}
 and properties of $[T_k]$. We refer interested reader to see Lemma 5.1
in \cite{CW}.

\begin{lemma}\label{divTD2v}
Suppose $v$ is a smooth function defined on the hypersurface $M^n\hookrightarrow \mathbb{R}^{n+1}$. Denote the Hessian of $v$ on $M$ by $D^2 v$, the second fundamental form of $M$ by $L$. Consider the polarized Newton transformation tensor
$[T_{k}]_{ij}(\overbrace{D^2v, ..., D^2v}^{m},L,...,L)$ introduced in Definition \ref{Newton}. The divergence of it satisfies
\begin{equation}
([T_{k}]_{ij}(\overbrace{D^2v, ..., D^2v}^{l},L,...,L))_i= [T_{k}]_{ij}(\overbrace{D^2v, ..., D^2v}^{l-1},L,...,L)  L_{mi} v_m
\quad \mbox{for each $j$}.
\end{equation}
\end{lemma}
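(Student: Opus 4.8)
The plan is to compute the divergence $([T_k]_{ij}(\overbrace{D^2v,\dots,D^2v}^{l},L,\dots,L))_i$ directly from the explicit formula \eqref{Newton}, differentiating the product and commuting covariant derivatives. Writing out
$$[T_k]_{ij}(\overbrace{D^2v,\dots,D^2v}^{l},L,\dots,L)=\frac{1}{k!}\delta^{i,i_1,\dots,i_k}_{j,j_1,\dots,j_k}\,(D^2v)_{i_1j_1}\cdots(D^2v)_{i_lj_l}\,L_{i_{l+1}j_{l+1}}\cdots L_{i_kj_k},$$
applying $\partial_i$ and using the full antisymmetry of the generalized Kronecker delta, the terms in which the derivative lands on one of the $L$-factors vanish by the Codazzi equation \eqref{Codazzi}, exactly as in Lemma \ref{divT}. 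Likewise, by the multilinearity and the symmetry of the tensor slots, the $l$ terms in which $\partial_i$ lands on a $D^2v$-factor all contribute equally, so it suffices to control a single such term.

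The core of the argument is then the third-order commutation for $v$ on $M$: since $D^2 v$ is the Hessian of a function, its covariant derivative $(D^2v)_{i_1 j_1,i}=v_{;i_1 j_1 i}$ is symmetric in $i_1,j_1$ but only symmetric in $j_1,i$ up to curvature, namely $v_{;i_1 j_1 i}-v_{;i_1 i j_1}=R_{j_1 i i_1}{}^{m}v_{;m}$ (Ricci identity on $M$). Contracting against the delta, the totally symmetric ``first'' part of the third derivative is annihilated by the antisymmetrization in $(i,i_1)$, and what survives is precisely the curvature term. By the Gauss equation for $M^n\hookrightarrow\mathbb{R}^{n+1}$ the intrinsic curvature is $R_{abcd}=L_{ac}L_{bd}-L_{ad}L_{bc}$; substituting this into the surviving term and collapsing one Kronecker-delta index pair converts one $(D^2v)$-slot together with the extra $L$ coming from Gauss into the structure $[T_k]_{ij}(\overbrace{D^2v,\dots,D^2v}^{l-1},L,\dots,L)\,L_{mi}\,v_m$, which is the claimed right-hand side.

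The main obstacle is bookkeeping: one must keep careful track of which index of the generalized Kronecker delta is being differentiated, verify that the two pieces of the Gauss-equation substitution combine with the correct combinatorial coefficient (one of them cancels against an already-present $L$-factor, the other reconstitutes the lower-order Newton tensor), and confirm that the numerical factor is exactly $1$ — i.e. that the $l$ equal contributions from differentiating the $D^2v$-slots are balanced by a $1/l$ normalization implicit in the polarized $[T_k]$. Once the combinatorics is pinned down, the identity follows with no further analysis; this is the same mechanism as in Lemma 5.1 of \cite{CW}, with the single new input being the Ricci/Gauss commutator for the third derivative of a scalar on $M$.
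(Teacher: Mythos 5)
Your strategy — differentiate the Kronecker-delta representation of the polarized $[T_k]$, kill the $L$-derivative terms by Codazzi, reduce the $D^2v$-derivative terms to intrinsic curvature via the Ricci identity, and substitute the Gauss equation — is exactly the mechanism the paper invokes (it cites Codazzi and Gauss and refers to \cite{CW} for the computation), so the overall approach matches.

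However, two of the bookkeeping outcomes you predict are not right, and you should not expect them to survive a careful calculation. After substituting the Gauss equation $R_{i_1 i j_1}{}^{m}v_m=(L_{i_1 j_1}L_i{}^{m}-L_{i_1}{}^{m}L_{i j_1})v_m$, \emph{both} products give nonvanishing contributions when contracted against $\delta^{i,i_1,\dots,i_k}_{j,j_1,\dots,j_k}$; neither one ``cancels against an already-present $L$-factor.'' The reorganization of the two pieces into the stated right-hand side goes instead through a rearrangement of the type encoded in (\ref{T}) (equivalently Lemma \ref{2.2Tk}). Separately, the polarized $[T_k]$ in (\ref{Newton}) carries only the uniform $1/k!$ normalization, with no $l$-dependent factor built in, so the $l$ equal contributions from differentiating the $D^2v$-slots genuinely accumulate to a coefficient proportional to $l$ — the numerical factor will not come out to be exactly~$1$ by any implicit normalization. (Relatedly, note the paper records the sign of the right-hand side inconsistently: compare the lemma as stated with the line preceding (\ref{5.14}).) The strategy is sound, but these constants and signs must be computed, not assumed.
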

The proof of this lemma uses the above Codazzi equation and the Gauss equation
\begin{equation}\label{Gauss}
0=\bar{R}_{ijkl}=R_{ijkl}-L_{ik}L_{jl}+L_{il}L_{jk}, \quad \mbox{(Gauss equation)}
\end{equation}
were the curvature tensor of
$M$ and the curvature tensor of the ambient space $\mathbb{R}^{n+1}$ are denoted by $R_{ijkl}$  and by $\bar{R}_{ijkl}$ respectively.
The detailed proof has appeared in (120)-(121)
of \cite{CW}.

\subsection{Restriction of a convex function to a submanifold}
Consider an isometric immersion $i:M^{n}\hookrightarrow \mathbb{R}^{n+1}$. Let $\nabla$ and $D^2$ (resp. $\bar{\nabla}$ and $\bar{D}^2$) be the gradient and the Hessian on $M$ (resp. on $\mathbb{R}^{n+1}$). We also denote the second fundamental form on $ M$ by $L_{ij}$ and
the inner unit normal by $\vec n $. Suppose $\bar{V}:
\mathbb{R}^{n+1}\rightarrow \mathbb{R}$ is a smooth function and $v=\bar{V}|_{M}$ is its restriction
to $M$. Then the Hessian of $v$ with respect to the metric on $M$ relates to the Hessian of
$\bar{V}$ on the ambient space $\mathbb{R}^{n+1}$ by
\begin{equation}
\begin{split}
D^2_{ij}v=&\bar{D}^2_{ij}\bar{V}+\langle (\bar{\nabla} \bar{V}, \vec n)\rangle L_{ij}\\
=&\bar{D}^2_{ij}\bar{V}+b(x) \cdot L_{ij},\\
\end{split}
\end{equation}
where $b(x):=\langle (\bar{\nabla} \bar{V}), \vec n \rangle (x)$. We remark in general $b(x)$ changes sign on $M$ and $|b(x)|\leq |\bar{\nabla} \bar{V} |$.

\section{Proof of the main theorem}
\noindent\textbf{Theorem} \ref{MS} (Main Theorem): \textit{Let $i: M^n\hookrightarrow \mathbb{R}^{n+1}$ be an isometric immersion. Let $U$ be an open subset of
 $M$ and $\p$ be a $C^{\infty}_c(U)$ function. For $k=2,..., n-1$, if $M$ is $(k+1)$-convex,
 then for any $1\leq l\leq k-1$, there exists a constant $C$ depending only on $n$ $k$ and $l$, such that
$$\displaystyle\left( \int_{M} \sigma_{l}(L)|\p|^{\frac{n-l}{n-k}}d\mu_M\right)^{\frac{n-k}{n-l}}\leq C
 \int_{M} (\sigma_{k}(L)|\p|+\sigma_{k-1}(L)|\nabla \p|+,...,+|\nabla^k \p| )d\mu_M.\\$$
If $k=n$, then the inequality holds when $M$ is $n$-convex. If $k=1$, then the inequality holds when $M$ is $1$-convex.
($k=1$ $l=0$ case is a corollary of the Michael-Simon inequality.)
}

The main technical part of this paper is the following proposition (Proposition \ref{a}).


\begin{proposition}\label{a} Let $E\subset \mathbb{R}^{n+1}$ be an $n$-dimensional
linear subspace, and $p$ be the orthogonal projection from $\mathbb{R}^{n+1}$ to $E$.
Suppose $V: E\to \mathbb{R}$ is a $C^3$ convex function that satisfies $|\nabla V|\leq 1$.
Define its extension to $\mathbb{R}^{n+1}$ by $\bar{V}:=V\circ p $, and define the restriction of
$\bar{V}$ to the immersed hypersurface $M$ by $v$.
Suppose also that $M$ is $(k+1)$-convex if $2\leq k\leq n-1$, i.e.
the second fundamental form $L_{ij}\in \Gamma_{k+1}^{+}$. And suppose
that $M$ is $n$-convex if $k=n$. Then
for each $k$ and each constant $a>1$ and any $ C^{\infty}_c(U)$ function $\p$, there exists a constant $C$, which depends only on $k$, $n$ and $a$, such that
\begin{equation}\label{a1}
 \displaystyle \int_{M} \sigma_k(D^2v +a L)\p d\mu_M\leq C \int_{M} \sigma_k(L)|\p|+\sigma_{k-1}(L)|\nabla \p|+...+ |\nabla^k \p|d\mu_M.\\
\end{equation}
Note that $C$ does not depend on $v$.
\end{proposition}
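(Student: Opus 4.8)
The plan is to expand $\sigma_k(D^2v + aL)$ multilinearly in terms of the polarization $\Sigma_k$, producing a sum of terms of the shape $\Sigma_k(\overbrace{D^2v,\dots,D^2v}^{m}, L,\dots,L)$ with binomial coefficients and powers of $a$, and then bound the integral against the weight $\p$ term by term. Using the ambient-restriction formula $D^2_{ij}v = \bar D^2_{ij}\bar V + b(x)L_{ij}$ from Section 2.2, and the fact that $\bar V = V\circ p$ with $V$ convex, the Hessian $\bar D^2\bar V$ is a nonnegative symmetric matrix with $\|\bar D^2 \bar V\|$ controlled but \emph{not} pointwise bounded — its only a priori control is integral, through the Monge–Ampère-type mass of $V$, together with $|b(x)|\le |\bar\nabla\bar V|\le 1$. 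The point of the proposition is precisely that no pointwise bound on $D^2 v$ is available, so the estimate must be obtained after integrating by parts, moving derivatives off $v$ and onto either $\p$ or the geometry of $M$.

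The key mechanism is integration by parts using the two divergence lemmas: Lemma \ref{divT} says $([T_k]_{ij}(L))_i = 0$, and Lemma \ref{divTD2v} says the divergence of the mixed Newton tensor $[T_k]_{ij}(\overbrace{D^2v,\dots}^{l},L,\dots)$ produces a lower-order term with an extra factor $L_{mi}v_m$. Writing a typical summand as $\int_M \p\, (D^2v)_{ij}[T_{k-1}]_{ij}(\overbrace{D^2v}^{m-1},L,\dots,L)\,d\mu_M$ and using $(D^2v)_{ij} = (v_i)_{;j}$ (a covariant derivative of the gradient, up to the sign/commutation corrections handled by Gauss–Codazzi inside Lemma \ref{divTD2v}), I would integrate by parts in the index $j$. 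This throws the derivative onto $\p\,[T_{k-1}]_{ij}$: part of it hits $\p$, producing $\nabla\p$ paired with a $\Sigma_{k-1}$-type quantity still containing $m-1$ copies of $D^2v$; part of it hits $[T_{k-1}]$, and by Lemma \ref{divTD2v} that divergence drops one $D^2v$ and inserts $L_{mi}v_m$, i.e. it trades a Hessian of $v$ for a gradient of $v$ times an $L$. Iterating this reduction $m$ times peels off all the $D^2v$ factors: each step either moves a derivative onto a weight (raising the order of the weight derivative by one, eventually reaching $|\nabla^k\p|$ after $k$ steps) or converts a $D^2v$ into a $|\nabla v|\,L$. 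Since $|\nabla v|\le |\bar\nabla\bar V|\le 1$ and $|b(x)|\le 1$, every surviving geometric factor is a $\Sigma_j$ of copies of $L$ with $j\le k$, which by the monotonicity/positivity facts (ii)–(iv) and the $(k+1)$-convexity of $M$ is dominated pointwise by $\sigma_j(L)$; the weight factor is $|\nabla^{k-j}\p|$. Collecting all terms yields exactly the right-hand side $\int_M \sum_{j=0}^{k}\sigma_j(L)|\nabla^{k-j}\p|\,d\mu_M$, with a constant depending only on $k,n,a$ (through the binomial coefficients and powers of $a$) and independent of $v$.

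The main obstacle is the bookkeeping of the integration-by-parts reduction in the mixed setting: one must show that at every stage the tensor being differentiated is a genuine polarized Newton tensor built from $D^2v$ and $L$ (so that Lemma \ref{divTD2v} applies), that the commutator terms from passing covariant derivatives through $[T_{k-1}]$ are absorbed by Gauss–Codazzi exactly as in that lemma, and — most delicately — that $b(x)$, which changes sign on $M$, does not spoil positivity. The cleanest route is to avoid isolating $D^2v = \bar D^2\bar V + bL$ too early: work directly with $D^2v$ as a symmetric tensor, carry out the full multilinear expansion and integration-by-parts reduction on $\sigma_k(D^2v+aL)$, and only at the very end use $|\nabla v|\le1$ to kill the $v$-gradients; the convexity of $V$ is then needed only to guarantee that the boundary terms have the correct sign and that $D^2v+aL \in \Gamma_k^+$ so that $\sigma_k(D^2v+aL)$ is the quantity we expand. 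This is the step that in the paper is decomposed into the $I$-, $J$-, $K$- and $N$-type estimates, with the $K$- and $N$-type pieces being the new contributions handling the weight $\p$ and its higher derivatives.
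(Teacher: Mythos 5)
Your high-level strategy — expand $\sigma_k(D^2v+aL)$ multilinearly in $\Sigma_k$, integrate by parts using Lemmas \ref{divT} and \ref{divTD2v}, and finish by using $|\nabla v|\le 1$ and $|b(x)|\le 1$ — is indeed the paper's overall plan, and you correctly identify the two divergence lemmas as the engine. But the description of the reduction, ``each step either moves a derivative onto a weight or converts a $D^2v$ into a $|\nabla v|\,L$,'' does not survive contact with the computation, and this is where the genuine content of the proof lies. A single integration by parts applied to $\int_M \Sigma_k(\overbrace{D^2v,\dots}^{i_0},L,\dots)\p\,d\mu_M$ does not peel off one $D^2v$ cleanly; it produces (formula (\ref{5.2}) of the paper) four distinct kinds of terms: an $I$-type and a $J$-type with index $i_0-2$, an $N$-type with index $i_0-1$ carrying $v_j\p_i$, and crucially a $K$-type term
$K_{k,i_0-3}^{(u)}(\p)=\int_M [T_{k-1}]_{ij}(\dots)\,v_{mi}\,v_j\,v_m\,u\,\p\,d\mu_M$,
which still contains a raw, uncontracted Hessian $v_{mi}$ outside the Newton tensor. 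There is no pointwise bound available for $v_{mi}$ (that is the whole point of the proposition), so this term cannot be absorbed the way your sketch suggests. It has to be attacked by writing $v_{mi}v_m=\tfrac12(|\nabla v|^2)_i$ and integrating by parts a second time, which spawns a nested recursion (formula (\ref{5.9})) producing yet another $K$-type term of index decreased by two, requiring separate even and odd base cases (Lemma \ref{k0}), together with more $I$-, $J$- and $N$-type terms. The interplay between the $K$-type and $N$-type estimates is the new technical core of the argument, not bookkeeping; your proposal treats it as such and thereby skips the actual difficulty.

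Two smaller but real misstatements. First, there are no boundary terms to worry about, since $\p\in C_c^\infty(U)$, so convexity of $V$ is not needed ``for the correct sign of boundary terms.'' What convexity actually buys is $\bar D^2\bar V\ge 0$, which is used pointwise to bound things like $-[T_{k-1}]_{ij}(\overbrace{\bar D^2\bar V,\dots}^{s},L,\dots)v_j\p_i$ by a trace times $|\nabla\p|$ in the $N$-type estimate and its analogues. Second, the hypothesis $a>1$ is not used in Proposition \ref{a} to force $D^2v+aL\in\Gamma_k^+$; the multilinear expansion of $\sigma_k(D^2v+aL)$ is purely algebraic and valid for any $a$, with $C$ depending on $a$ only through the coefficients. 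The condition $a>1$ matters in the application in Section 3 (the $(a-1)$ factors in the lower bound for $LHS$), not here. So while your sketch is aimed in the right direction, the proof as described has a gap exactly at the $K$-type term, and the role you assign to the convexity of $V$ and to the parameter $a$ is not the one they actually play.
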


Our proof of Proposition \ref{a} uses a multi-layer induction process and is quite complicated.
We will first illustrate the idea of the proof of the proposition for the (easy) case $k=2$ in Section 4,
and we will finish the proof for all integers $k$ in Section 5.
In the rest of this section, we will prove the main theorem assuming Proposition \ref{a}.
The proof follows the outline similar to that of the main theorem in \cite{CW} which is inspired by the work of P. Castillon \cite{Castillon}. Since such an argument is standard and has appeared with minor difference in \cite{CW} already, we will only describe the difference of its proof from the one in \cite{CW} without repeating the whole paragraph.

\begin{proof}[Brief outline of the Proof of Theorem \ref{MS}]
The differences of the proof is to first take a different function $f$ on $M$. Namely, instead of taking \begin{equation}
\displaystyle f:= \frac{\sigma_{k-1}(L)J_{E}^{\frac{1}{n-k}}}{\int_{M}\sigma_{k-1}(L)J_{E}^{\frac{1}{n-k}}d\mu_M}
\end{equation}
we define
\begin{equation}
\displaystyle f:= \frac{\sigma_{l}(L)|\p|^{\frac{n-l}{n-k}}J_{E}^{\frac{k-l}{n-k}}}{\int_{M}\sigma_{l}(L)|\p|^{\frac{n-l}{n-k}}J_{E}^{\frac{k-l}{n-k}}d\mu_M}.
\end{equation}
$f(x)dx$ is again a probability measure on $M$. Thus we follow the same argument to derive inequality (37) in \cite{CW}:
\begin{equation}\begin{split}\label{LHSRHS}
&\left(\omega_{n}f(x)J_{E}(x)\right)^{\frac{k-l}{n-l}}
\cdot \frac{\sigma_{l}(\bar{D}^2\bar{V}+(a-1)L)}
{\sigma_{l}(\bar{D}^2\bar{V})^{\frac{k-l}{n-l}}}\\
\leq &\displaystyle \left(det(\bar{D}^2\bar{V}(x))\right)^{\frac{k-l}{n-l}}\cdot
\frac{\sigma_{l}(\bar{D}^2\bar{V}+(a-1)L)}
{\sigma_{l}(\bar{D}^2\bar{V})^{\frac{k-l}{n-l}}} .
\end{split}
\end{equation}
Denote the left hand side (resp. right hand side) of this inequality by $LHS$ (resp. $RHS$).
By exactly the same argument using the method of optimal transport as in \cite{CW},
\begin{equation}\begin{split}
RHS\leq & C_{n,k}^{\frac{1}{n-k+1}} \sigma_{k}(D^2 v+aL);\\
\end{split}
\end{equation}
while on the other hand, by taking the newly defined function $f$, we obtain
\begin{equation}
\begin{split}
LHS \geq & \displaystyle \frac{(a-1)^{l\cdot(1-\frac{k-l}{n-l})}\omega_{n}^{\frac{k-l}{n-l}}\sigma_{l}(L)|\p|^{\frac{k-l}{n-k}}J_{E}^{\frac{k-l}{n-k}}}
{(\displaystyle \int_{M}\sigma_{l}(L)|\p|^{\frac{n-l}{n-k}}J_{E}^{\frac{k-l}{n-k}}d\mu_M)^{\frac{k-l}{n-l}}}.
\end{split}
\end{equation}
Now we multiply $|\p|$ on both $LHS$ and $RHS$, and integrate both of them over $M$.
This gives rise to
\begin{equation}\begin{split}
&\displaystyle \frac{\displaystyle(a-1)^{l\cdot(1-\frac{k-l}{n-l})}\omega_n^{\frac{k-l}{n-l}}\int_{M}
\sigma_{l}(L)|\p|^{\frac{n-l}{n-k}}J_{E}^{\frac{k-l}{n-k}}d\mu_M}{(\displaystyle \int_{M}
\sigma_{l}(L)|\p|^{\frac{n-l}{n-k}}J_{E}^{\frac{k-l}{n-k}}d\mu_M)^{\frac{k-l}{n-l}}}\\
\leq & C_{n,k}^{\frac{1}{n-k+1}} \int_{M} \sigma_{k}(D^2 v+aL)|\p|d\mu_M.\\
\end{split}
\end{equation}
This inequality plays the same role as inequality (47) in \cite{CW}. The argument after this inequality follows exactly in the same way as that in \cite{CW}. This finishes the brief description of the differences of the proof from the one in \cite{CW}.
\end{proof}

We remark here that regularity issue for optimal transport of non-convex domains will appear as it does in our previous paper \cite{CW}. Again, one can handle the problem using the approximation argument together with L. Caffarelli's regularity result ({\cite{Caff1}, \cite{Caff2}, \cite{Caff3}) for strictly convex domains. Such a method has also been demonstrated in \cite{CW} already, so we will not repeat it here.

\section{$k=2$ case of Proposition \ref{a} }
In this section, we are going to prove
\begin{equation}
 \displaystyle \int_{M} \sigma_2(D^2v+ a L) \p d\mu_M\leq C \int_{M} (\sigma_2(L) |\p|+ \sigma_1(L)|\nabla \p|+ |\nabla^2 \p|) d\mu_M.\\
\end{equation}
\begin{proof}
First of all, we can write
\begin{equation}\begin{split}\label{2.9}
 \displaystyle \int_{M} \sigma_2(D^2v+ a L) \p d\mu_M= &\displaystyle \int_{M} \frac{1}{2} \Sigma_2(D^2v+ a L,D^2v+ a L) \p d\mu_M\\
 =&\displaystyle  \int_{M} \frac{1}{2}\Sigma_2(D^2 v, D^2 v)\p+
 a\Sigma_2(D^2 v, L)\p+ \frac{a^2}{2} \Sigma_2 (L,L)\p  d\mu_M\\
 :=&\frac{1}{2}I + a \cdot II+ \frac{a^2}{2} III.\\
 \end{split}
\end{equation}

To bound the term I, by Definition 2.4 and the integration by parts formula
\begin{equation}\begin{split}\label{2.1}
I:=&\displaystyle\int_{M} \Sigma_2(D^2 v, D^2 v) \p d\mu_M \\
=&\displaystyle\int_{M}  v_{ij} [T_{1}]_{ij}(D^2 v) \p d\mu_M \\
=& \displaystyle\int_{M}  -v_{j} ([T_{1}]_{ij}(D^2 v))_i \p - v_{j} [T_{1}]_{ij}(D^2 v) \p_i d\mu_M \\
\end{split}
\end{equation}
For the first term, we apply the Riemannian curvature equation, $$([T_{1}]_{ij}(D^2 v))_i= v_{ii, j}- v_{ij,i}= R_{miij}v_{m}= (L_{mi}L_{ij}-L_{mj}L_{ii})v_{m}= -[T_{1}]_{ij}(L) L_{mi}v_{m}.$$
Thus
\begin{equation}\begin{split}\label{2.6}
&\displaystyle\int_{M}  -v_{j} ([T_{1}]_{ij}(D^2 v))_i \p d\mu_M \\
=& \displaystyle\int_{M}[T_{1}]_{ij}(L) L_{mi} v_{j} v_{m} \p d\mu_M \\
\end{split}
\end{equation}
By the assumption $L_{ij}\in \Gamma_3^+$, $[T_{1}]_{ij}(L) L_{mi}\leq \sigma_2(L)g_{ij}$.
In fact, one can diagonalize $L_{ij}\sim diag(\lambda_1, \cdots , \lambda_n)$; thus $[T_{1}]_{ij}(L) L_{mi}$ is also diagonalized,
$$[T_{1}]_{ij}(L) L_{mi}\sim diag(\lambda_1(\sigma_1(L)-\lambda_1), \cdots , \lambda_n(\sigma_1(L)-\lambda_n)).$$
We remark here that $[T_{1}]_{ij}(L) L_{mi}=L_{mj}L_{ii}-L_{mi}L_{ij}$ is a symmetric matrix.\\
Note that $$\lambda_i(\sigma_1(L)-\lambda_i)+ \frac{\partial \sigma_3(L)}{\partial\lambda_i} =\sigma_2(L), \quad
\mbox{for each $i$}.$$
 Also $L_{ij}\in \Gamma_3^+$ implies $\frac{\partial \sigma_3(L)}{\partial\lambda_i}\geq 0$. Thus $\lambda_i(\sigma_1(L)-\lambda_i)\leq \sigma_2(L)$ for each $i$. Therefore $[T_{1}]_{ij}(L) L_{mi}\leq \sigma_2(L)g_{ij}$.
Applying this to (\ref{2.6}), we get
\begin{equation}\begin{split}\label{2.7}
\displaystyle\int_{M}  -v_{j} ([T_{1}]_{ij}(D^2 v))_i \p d\mu_M
\leq  \displaystyle\int_{M} \sigma_2(L) |\nabla v|^2 \cdot |\p| d\mu_M\leq \displaystyle\int_{M} \sigma_2(L)  |\p| d\mu_M.\\
\end{split}
\end{equation}
with the last inequality following from $|\nabla v|\leq 1$.\\
For the second term $\int_{M}-v_{j} [T_{1}]_{ij}(D^2 v) \p_i d\mu_M$ in (\ref{2.1}), we use the relation
$D^2 v=\bar{D}^2\bar{V}+b(x)L$.
\begin{equation}\begin{split}
&\displaystyle\int_{M}-v_{j} [T_{1}]_{ij}(D^2 v) \p_i d\mu_M \\
=& \displaystyle\int_{M}-v_{j} [T_{1}]_{ij}(\bar{D}^2\bar{V}+b(x)L) \p_i d\mu_M \\
=&\displaystyle\int_{M}-v_{j} [T_{1}]_{ij}(\bar{D}^2\bar{V}) \p_i d\mu_M
+\displaystyle\int_{M}-v_{j} [T_{1}]_{ij}(b(x)L) \p_i d\mu_M.\\
\end{split}
\end{equation}
Since $[T_1]_{ij}(\bar{D}^2\bar{V})\geq 0$, $[T_{1}]_{ij}(L) \geq 0$, $|b(x)|\leq 1$, we have
$$-[T_1]_{ij}(\bar{D}^2\bar{V}) v_j \p_i\leq Tr([T_{1}]_{ij}(\bar{D}^2\bar{V})) |\nabla \p| \cdot |\nabla v|, $$
and $$-b(x)[T_1]_{ij}(L)v_j \p_i\leq Tr([T_{1}]_{ij} (L) )|\nabla \p|\cdot |\nabla v|,$$
where $Tr([T_{1}]_{ij}(\bar{D}^2\bar{V}))$ denotes the trace of $[T_1]_{ij}(\bar{D}^2\bar{V})$ and
$Tr([T_{1}]_{ij} (L) )$ denotes the trace of $[T_{1}]_{ij}(L)$.
Thus \begin{equation}\begin{split}\label{2.2}
&\displaystyle\int_{M} -v_{j} [T_1]_{ij}(\bar{D}^2\bar{V} +b(x)L) \p_i d\mu_M\\
\leq & \displaystyle\int_{M} Tr([T_{1}]_{ij}(\bar{D}^2\bar{V})) |\nabla \p| \cdot |\nabla v|+
Tr([T_{1}]_{ij} (L) )|\nabla \p|\cdot |\nabla v|d\mu_M\\
=&\displaystyle\int_{M}  (n-1)\sigma_{1}(\bar{D}^2\bar{V})|\nabla \p| \cdot |\nabla v|+(n-1)\sigma_{1}(L) |\nabla \p| \cdot |\nabla v|d\mu_M.\\
\end{split}
\end{equation}
Since $|\nabla v|\leq 1$,
\begin{equation}\label{2.3}\displaystyle\int_{M}  \sigma_{1}(L) |\nabla \p| \cdot |\nabla v|d\mu_M\leq\displaystyle\int_{M}  \sigma_{1}(L) |\nabla \p|d\mu_M.\end{equation}
On the other hand,
\begin{equation}\begin{split}\label{2.4}
&\displaystyle\int_{M}  \sigma_{1}(\bar{D}^2\bar{V}) |\nabla \p| \cdot |\nabla v|d\mu_M\\
\leq &\displaystyle\int_{M}  \sigma_{1}(\bar{D}^2\bar{V}) |\nabla \p| d\mu_M\\
=& \displaystyle\int_{M}  \sigma_{1}(D^2 v-b(x)L) |\nabla \p| d\mu_M\\
\leq& \displaystyle\int_{M}  \sigma_{1}(L) |\nabla \p| d\mu_M+ \displaystyle\int_{M}  \sigma_{1}(D^2 v) |\nabla \p| d\mu_M.\\
\end{split}\end{equation}
By integration by parts, the last line is equal to
\begin{equation}\begin{split}\label{2.5}
&  \displaystyle\int_{M} \sigma_{1}(L) |\nabla \p| d\mu_M-  \displaystyle\int_{M} v_{i} (|\nabla \p|)_i  d\mu_M \\
\leq &\displaystyle\int_{M} \sigma_{1}(L) |\nabla \p| d\mu_M+ \displaystyle\int_{M}  |\nabla v|\cdot |\nabla^2 \p|  d\mu_M \\
\leq &\displaystyle\int_{M}    \sigma_{1}(L) |\nabla \p|+ |\nabla^2 \p| d\mu_M.\\
\end{split}\end{equation}
Here we have used $|\nabla |\nabla \p||\leq |\nabla^2 \p|$.
Plugging (\ref{2.3})-(\ref{2.5}) into (\ref{2.2}), we get
\begin{equation}\begin{split}\label{2.8}
&\displaystyle\int_{M} v_{j} [T_1]_{ij}(\bar{D}^2\bar{V} +b(x)L) \p_i d\mu_M\\
\leq & \displaystyle\int_{M}    2(n-1)\sigma_{1}(L) |\nabla \p| + (n-1)|\nabla^2 \p|d\mu_M.\\
\end{split}\end{equation}
Thus the second term in (\ref{2.1}) is bounded by $ \int_{M}  (n-1)|\nabla^2 \p|+  2(n-1)\sigma_{1}(L) |\nabla \p| d\mu_M$.
Therefore, we conclude from (\ref{2.7}) and (\ref{2.8}) that
\begin{equation}\begin{split}
I:=&\displaystyle\int_{M} \Sigma_2(D^2 v, D^2 v) \p d\mu_M \\
=& \displaystyle\int_{M}  -v_{j} ([T_{1}]_{ij}(D^2 v))_i \p- v_{j} [T_{1}]_{ij}(D^2 v)
\p_i d\mu_M \\
\leq &\displaystyle\int_{M}\sigma_2 (L) |\p|+ 2(n-1)\sigma_{1}(L) |\nabla \p|+ (n-1)|\nabla^2 \p| d\mu_M\\
\leq &C\displaystyle\int_{M}\sigma_2 (L) |\p|+ \sigma_{1}(L) |\nabla \p|+ |\nabla^2 \p| d\mu_M,\\
\end{split}
\end{equation}
where $C$ depends only on $k$, which is equal to 2 in this section, and $n$.
This finishes the estimate of $I$.\\

To bound the term $II$ in (\ref{2.9}),
\begin{equation}\begin{split}
II:=&\displaystyle \int_M \Sigma_2(D^2 v, L) \p d\mu_M\\
= & \displaystyle \int_M v_{ij} [T_{1}]_{ij}(L) \p d\mu_M\\
= & \displaystyle \int_{M} -v_j ([T_{1}]_{ij}(L))_i \p - v_j  [T_{1}]_{ij}(L) \p_i d\mu_M.\\
\end{split}
\end{equation}
Recall that $([T_{1}]_{ij}(L))_i=0$ by the Codazzi equation. This, with by $|\nabla v|\leq 1$, implies that
\begin{equation}\begin{split}
II= & \displaystyle \int_{M} - v_j  [T_{1}]_{ij}(L) \p_i d\mu_M\\
\leq & \displaystyle \int_{M}\sigma_{1}(L) |\nabla v|\cdot |\nabla \p|d\mu_{M}\\
\leq &\displaystyle \int_{M}\sigma_{1}(L) |\nabla \p|d\mu_{M}.\\
\end{split}
\end{equation}

Finally, the estimate of term $III$ in (\ref{2.9}) is straightforward, since $\int_M \sigma_2(L)\p d\mu_M\leq \int_M \sigma_2(L)|\p|d\mu_M$.\\
In conclusion,
\begin{equation}\begin{split}
 \displaystyle \int_{M} \sigma_2(D^2v+ a L) \p d\mu_M= &\frac{1}{2}I + a\cdot II+ \frac{a^2}{2}\cdot III\\
 \leq & \displaystyle C\int_{M}(\sigma_2(L)|\p|+ \sigma_{1}(L) |\nabla \p|+ |\nabla^2 \p|) d\mu_{M}.\\
 \end{split}
\end{equation}
This completes the proof of Proposition \ref{a} when $k=2$.
\end{proof}


\section{General $k$ case of Proposition \ref{a}}
By the multi-linearity of $\Sigma_{k}(\cdot, ...\cdot)$, it is sufficient to
prove
\begin{equation}\label{a2}\int_{M} \Sigma_k(\overbrace{D^2v,...,D^2v}^{i_0}, L,...,L) \p d\mu_{M}\leq
C \displaystyle \int_{M} (\sigma_{k}(L)|\p|+\sigma_{k-1}(L)|\nabla \p|+...+|\nabla^k \p| )d\mu_M
\end{equation} for each $0 \leq i_0\leq k$.
In the following, we first prove (\ref{a2}) for two initial values $i_0=1 $ and $i_0=2$. We need two initial cases to start the induction argument since the index $i_0$ decreases by 2 in each induction step.

For $i_0=1$,
\begin{equation}\begin{split}
&\displaystyle \int_{M} \Sigma_k(D^2v, L,...,L) \p d\mu_{M}\\
=&\displaystyle \int_{M} v_{ij} [T_{k-1}]_{ij}(L) \p d\mu_{M}\\
=&\displaystyle  \int_{M} - v_{j} ([T_{k-1}]_{ij}(L))_i \p -v_{j} [T_{k-1}]_{ij}(L) \p_i d\mu_{M}.\\
\end{split}\end{equation}
By Lemma \ref{divT}, $ ([T_{k-1}]_{ij}(L))_i=0 $; thus
\begin{equation}\begin{split}
&\displaystyle \int_{M} \Sigma_k(D^2v, L,...,L) \p d\mu_{M}\\
=&\displaystyle  \int_{M} -v_{j} [T_{k-1}]_{ij}(L) \p_i d\mu_{M}.\\
\end{split}\end{equation}
Now $L\in \Gamma_{k+1}^+\subseteq \Gamma_{k}^+$ implies $[T_{k-1}]_{ij}(L) \geq 0$. Thus
\begin{equation}
\begin{split}
\displaystyle\int_{M} -v_{j} [T_{k-1}]_{ij}(L) \p_i d\mu_{M}\leq &\displaystyle\int_{M} Tr([T_{k-1}]_{ij}(L))|\nabla \p|\cdot |\nabla v| d\mu_{M},\\
\end{split}
\end{equation}
where $Tr([T_{k-1}]_{ij}(L))$ denotes the trace of $[T_{k-1}]_{ij}(L)$, which is, by (\ref{Trace1}), equal to $(n-k+1)\sigma_{k-1}(L)$. Hence
\begin{equation}
\begin{split}
\displaystyle\int_{M} -v_{j} [T_{k-1}]_{ij}(L) \p_i d\mu_{M}
\leq &\displaystyle C \int_{M} \sigma_{k-1}(L)|\nabla \p|d\mu_{M},\\
\end{split}
\end{equation}
where $C$ depends only on $n$ and $k$.\\

To prove the inequality (\ref{a2}) with $i_0=2$,
\begin{equation}
\begin{split}
&\displaystyle \int_{M} \Sigma_k(D^2v,D^2v, L,...,L) \p d\mu_{M}\\
=&\displaystyle \int_{M} v_{ij} [T_{k-1}]_{ij}(D^2v, L,...,L) \p d\mu_{M}\\
=&\displaystyle  \int_{M} - v_{j} ([T_{k-1}]_{ij}(D^2v, L,...,L) )_i \p -v_{j} [T_{k-1}]_{ij}(D^2v, L,...,L)  \p_i d\mu_{M}.\\
\end{split}
\end{equation}
By Lemma \ref{divTD2v}, $ ([T_{k-1}]_{ij}(D^2v, L,...,L) )_i= [T_{k-1}]_{ij}(L)L_{mi}v_m  $,
\begin{equation}\begin{split}\label{5.3}
&\displaystyle \int_{M} \Sigma_k(D^2v, D^2v, L,...,L) \p d\mu_{M}\\
=&\displaystyle  \int_{M}  -  [T_{k-1}]_{ij}(L,...,L)L_{mi}v_j v_m \p
 - [T_{k-1}]_{ij}(D^2v, L,...,L)  v_{j}\p_i d\mu_{M}.\\
\end{split}\end{equation}
For the first term on the last line of (\ref{5.3}), by (\ref{T})
$$ [T_{k-1}]_{ij}(L) L_{mi}= \sigma_{k}(L)\delta_{mj}-[T_{k}]_{mj}(L);$$ thus
we have \begin{equation}\begin{split}
& \displaystyle \int_{M} [T_{k-1}]_{ij}(L,...,L) L_{mi} v_j v_m \p d\mu_M\\
=&\displaystyle \int_{M} \sigma_{k}(L) |\nabla v|^2 \p d\mu_M
-\displaystyle \int_{M} [T_{k}]_{mj}(L) v_j v_m \p d\mu_M.\\
\end{split}
\end{equation}
Note that $|\nabla v|\leq 1$, so
$$\displaystyle \int_{M} \sigma_{k}(L) |\nabla v|^2 \p d\mu_M
\leq \displaystyle \int_{M} \sigma_{k}(L) |\p| d\mu_M.$$
Also, due to the fact that $L\in \Gamma_{k+1}^{+}$, $[T_{k}]_{mj}(L)\geq 0$. Thus
\begin{equation}\begin{split}
\displaystyle -\int_{M}[T_{k}]_{mj}(L) v_j v_m \p d\mu_M\leq C\int_{M}\sigma_{k}(L) |\nabla v|^2 |\p| d\mu_M
\leq C\int_{M}\sigma_{k}(L)  |\p| d\mu_M.\\
\end{split}
\end{equation}
For the second term in (\ref{5.3}), we first use $D^2v=\bar{D}^2v+ b(x)L$;
with $|b(x)|\leq 1$, $\bar{D}^2_{ij}v\geq 0$, $[T_{k-1}]_{ij}(L)\geq 0$ and $|\nabla v|\leq 1$, it is easy to see
\begin{equation}\begin{split}\label{5.4}
 &\displaystyle\int_{M}- [T_{k-1}]_{ij}(D^2v, L,...,L)  v_{j}\p_i d\mu_{M}\\
=&\displaystyle\int_{M}-b(x)[T_{k-1}]_{ij}(L,...,L) v_{j}\p_i -[T_{k-1}]_{ij}(\bar{D}^2v, L,...,L) v_{j}\p_i d\mu_{M}\\
\leq &\displaystyle \int_{M}  Tr([T_{k-1}]_{ij}(L))  |\nabla \p|+Tr([T_{k-1}]_{ij}(\bar{D}^2v, L,...,L)) |\nabla\p| \cdot
|\nabla v|d\mu_{M}\\
\leq &\displaystyle C \int_{M} \sigma_{k-1}(L)  |\nabla \p| +\Sigma_{k-1}(\bar{D}^2v, L,...,L)|\nabla\p|d\mu_{M}.\\
\end{split}\end{equation}
We now apply $D^2v=\bar{D}^2v+ b(x)L$ again. Then
\begin{equation}\begin{split}
&\displaystyle C \int_{M}\sigma_{k-1}(L)  |\nabla \p|+ \Sigma_{k-1}(\bar{D}^2v, L,...,L)|\nabla\p| d\mu_{M}\\
= &\displaystyle C\int_{M} (1-b(x)) \sigma_{k-1}(L) |\nabla \p| + \Sigma_{k-1}(D^2v, L,...,L)|\nabla\p|d\mu_{M}\\
\leq &\displaystyle C\int_{M}  \sigma_{k-1}(L) |\nabla \p| + \Sigma_{k-1}(D^2v, L,...,L)|\nabla\p|d\mu_{M}.\\
\end{split}\end{equation}
Now by our earlier result for $i_0=1$ in this section, $$\int_{M}\Sigma_{k}(D^2v, L,...,L)|\p|  d\mu_{M}\leq C\int_{M}\sigma_{k-1}(L)|\nabla \p|  d\mu_{M}$$ for arbitrary positive integer $k\leq n$ and any function $\p$.
In particular, this inequality holds for $k-1\leq n$ and function?? $|\nabla \p|$, namely
\begin{equation}\label{5.5}\int_{M} \Sigma_{k-1}(D^2v, L,...,L)|\nabla\p| d\mu_{M}\leq C \int_{M} \sigma_{k-2}(L) |\nabla^2\p| d\mu_{M}.
\end{equation}
Here we have used the fact $|\nabla |\nabla \p||\leq |\nabla^2 \p|$.
To conclude, by (\ref{5.4})-(\ref{5.5}), we get
\begin{equation}\begin{split}
&\displaystyle\int_{M}- [T_{k-1}]_{ij}(D^2v, L,...,L)  v_{j}\p_i d\mu_{M}\\
\leq &\displaystyle C \int_{M}  \sigma_{k-1}(L)  |\nabla \p| +\Sigma_{k-1}(\bar{D}^2v, L,...,L)|\nabla\p| d\mu_{M}\\
\leq &\displaystyle C\int_{M}   \sigma_{k-1}(L) |\nabla \p|+ \sigma_{k-2}(L)|\nabla^2\p| d\mu_{M}.\\
\end{split}\end{equation}
This finishes the estimate of the second term in (\ref{5.3}).
Therefore
\begin{equation}\begin{split}
\int_{M} \Sigma_k(D^2v, D^2v, L,...,L) \p d\mu_{M}\leq C\displaystyle \int_{M}\sigma_{k}(L)  |\p|+\sigma_{k-1}(L) |\nabla \p| +\sigma_{k-2}(L)|\nabla^2\p| d\mu_{M}.\\
\end{split}
\end{equation}
This finishes the proof of (\ref{a2}) for $i_0=2$.\\

Now we aim to prove (\ref{a2}) for $i_0=3,...,k;$ i.e.
\begin{equation}\label{5.8}
\begin{split}
I_{k,m}(\p):=& \displaystyle\int_{M} \Sigma_k(\overbrace{D^2v,..., D^2v}^m, L,...,L) \p d\mu_{M}\\
\leq &\displaystyle  C \int_M \sigma_k(L) |\p|+ \sigma_{k-1}(L)|\nabla \p|+\cdots + |\nabla^k \p|    d\mu_M,
\end{split}
\end{equation}
for some $C$ depending only on $n$ and $k$.
To begin the inductive argument, we assume (\ref{5.8}) holds for $m=1,..., i_0-1$ where $i_0\geq 3$,
which we call the inductive assumption in the following;
with this we will show (\ref{5.8}) for $m=i_0$. To simplify $I_{k,i_0}(\p)$,
we apply a similar integration by parts argument as the one to show formula (128)
in \cite{CW}. Such
an argument splits the estimate of $I_{k,i_0}$ into four parts.
\begin{equation}\label{5.2}
\begin{split}I_{k,i_0}(\p)= &\displaystyle (i_0-1)\frac{C_{k}^{{i_0}-2}}{kC_{k-1}^{{i_0}-2}}
\cdot I_{k,i_0-2}^{(|\nabla v|^2)}(\p)+(i_0-1)\frac{C_{k}^{{i_0}-2}}{C_{k-1}^{{i_0}-2}}
\cdot J_{k,i_0-2}^{(-1)}(\p)+(i_0-1)\frac{C_{k-1}^{{i_0}-3}}{C_{k-1}^{{i_0}-2}}\cdot K_{k,i_0-3}^{(-1)}(\p)\\
&+ N_{k,i_0-1}^{(-1)}(\p),\\
\end{split}
\end{equation}
where \begin{equation}\label{i2}I_{k,l}^{(u)}(\p):=\displaystyle \int_{M} \Sigma_{k}(\overbrace{D^2v,...,D^2v}^{l},L,...,L)u(x)\p(x) d\mu_M,\end{equation}
\begin{equation}\label{j2}J_{k,l}^{(u)}(\p):=\displaystyle\int_{M}[T_{k}]_{mj}(\overbrace{D^2v,...,D^2v}^{l},L,...,L)v_jv_m u(x)\p(x) d\mu_M,\end{equation}
\begin{equation}\label{k2}K_{k,l}^{(u)}(\p):=\displaystyle \int_{M} [T_{k-1}]_{ij}(\overbrace{D^2v,...,D^2v}^{l},L,...,L)v_{mi}v_jv_m u(x)\p(x) d\mu_M, \end{equation}
and
\begin{equation}\label{n2}N_{k,l}^{(u)}(\p):=\displaystyle \int_{M} [T_{k-1}]_{ij}(\overbrace{D^2v,...,D^2v}^{l},L,...,L)v_j\p_i u(x) d\mu_M. \end{equation}
We remark that in the above definitions, $\p(x)$ is the test
function that has appeared in the statement of the main theorem, while $u(x)$ is a bounded coefficient function which may vary from line to line in our later argument.

In the following we will call any term that takes the form $I_{k,l}^{(u)}(\p)$, $J_{k,l}^{(u)}(\p)$, $K_{k,l}^{(u)}(\p)$, $N_{k,l}^{(u)}(\p)$ the $I$-type term, the $J$-type term, the $K$-type and the $N$-type term respectively. In the special case when $u=1$, we will denote $I_{k,l}^{(1)}(\p)$, $J_{k,l}^{(1)}(\p)$, $K_{k,l}^{(1)}(\p)$, $N_{k,l}^{(1)}(\p)$ by $I_{k,l}(\p)$, $J_{k,l}(\p)$, $K_{k,l}(\p)$, $N_{k,l}(\p)$ for simplicity.\\
In order to prove (\ref{5.8}) we need to estimate the $I$-type term, the $J$-type term, the $K$-type and the $N$-type term individually. The main idea of the proof is that each of the four terms in (\ref{5.2}) is of an decreased index ($i_0-1$, $i_0-2$ or $i_0-3$); if we can bound them by the $I$-type terms with indices strictly less than $i_0$, then we can apply the inductive assumption. We will show both the $I$-type term and the $J$-type term are bounded by $\sum_{s\leq l}I_{k,s}(\p)$; the $N$-type term is bounded by $\sum_{s\leq l}I_{k,s}(\nabla\p)$; and the $K$-type term is inductively bounded by the $K$-type term $\sum_{s\leq l}K_{k,s}(\p)$ and the $N$-type term $\sum_{s\leq l}N_{k,s}(\p)$, thus bounded by
$$\sum_{s\leq l}I_{k,s}^{(1)}(\p)+\sum_{s\leq l}I_{k,s}^{(1)}(\nabla\p).$$

We begin by looking at the $I$-type term, the $J$-type term. They can be estimated using a similar argument as the ones proved in Lemma 6.3 and Claim 2 in \cite{CW}. We present the results here without proof.

\textbf{\textit{Proposition I}}: \textit{For any bounded function $u(x)$, let us denote $\max_{x\in M} |u(x)|$ by $U$.
Then for any $l\geq 0$ and any function $\p$, there exist positive constants
$A_0,..., A_{l}$ depending on $U$, $k$, and $n$, such that
\begin{equation}\label{I2}
\begin{split}
I_{k,l}^{(u)}(\p)\leq \displaystyle \sum_{s=0}^{l}A_s I_{k,s}(|\p|).\\
\end{split}
\end{equation}
In particular, one can choose $A_l=U$.}

\textbf{\textit{Proposition J}}: \textit{For any bounded function $u(x)$, let us denote $\max_{x\in M} |u(x)|$ by $U$.
Then for any $l\geq 0$ and any function $\p$, there exist positive constants
$A_0,..., A_{l}$ depending on $U$, $k$, and $n$, such that
\begin{equation}\label{J2}
\begin{split}
J_{k,l}^{(u)}(\p)\leq \displaystyle \sum_{s=0}^{l} A_s I_{k,s}(|\p|).\\
\end{split}
\end{equation}
}

On the other hand, the $K$-type and $N$-type estimates are quite different from those in \cite{CW}.
They will be the focus of the argument below. We begin by proving the $N$-type estimate first.

\textbf{\textit{Proposition N}}: \textit{For any bounded function $u(x)$, let us denote $\max_{x\in M} |u(x)|$ by $U$.
Then for any $l\geq 0$ and any function $\p$, there exist positive constants
$\tilde{A}_0,..., \tilde{A}_{l}$ depending on $U$, $k$, and $n$, such that
\begin{equation}\label{N2}
\begin{split}
N_{k,l}^{(u)}(\p)\leq \displaystyle \sum_{s=0}^{l} \tilde{A}_s I_{k-1,s}(|\nabla \p|).\\
\end{split}
\end{equation}
}

We present the whole proof of the $N$-type estimate in the following, since this type of estimate has not appeared
in \cite{CW}.

\begin{proof}
Recall that
\begin{equation}
N_{k,l}^{(u)}(\p):= \displaystyle \int_{M} [T_{k-1}]_{ij}(\overbrace{D^2 v,...,D^2 v}^{l},L,...,L) v_j\p_i u(x)d\mu_M. \end{equation}
By $D^2 v= \bar{D}^2 V + b(x)L$ with $|b(x)|\leq 1$, we have
\begin{equation}
\begin{split}
N_{k,l}^{(u)}(\p)=
&\displaystyle \sum_{s=0}^{l} \int_M b_s(x)[T_{k-1}]_{ij}(\overbrace{\bar{D}^2 v,...,\bar{D}^2 v}^{s},L,...,L) v_j\p_i u(x)d\mu_M,\\
\end{split}
\end{equation}
where $b_s(x)$ are some bounded functions with bounds only depending on $n$ and $k$.
Notice
$$[T_{k-1}]_{ij}(\overbrace{\bar{D}^2 v,...,\bar{D}^2 v}^{s},L,...,L)\geq 0, $$
and $|\nabla v|\leq 1$. Thus
\begin{equation}
\begin{split}
&[T_{k-1}]_{ij}(\overbrace{\bar{D}^2 v,...,\bar{D}^2 v}^{s},L,...,L)v_j \p_i u\\
\leq& U\cdot Tr([T_{k-1}]_{ij}(\overbrace{\bar{D}^2 v,...,\bar{D}^2 v}^{s},L,...,L) )\cdot |\nabla \p|\\
=&U\cdot \frac{n-(k-1) }{k-1}  \Sigma_{k-1}(\overbrace{\bar{D}^2 v,...,\bar{D}^2 v}^{s},L,...,L) \cdot |\nabla\p|.
\end{split}
\end{equation}
\begin{equation}
\begin{split}\label{5.16}
N_{k,l}^{(u)}(\p)\leq &
\displaystyle \sum_{s=0}^{l} \tilde{A}_s\cdot\int_M \Sigma_{k-1}(\overbrace{\bar{D}^2 v,...,\bar{D}^2 v}^{s},L,...,L) \cdot|\nabla\p| d\mu_M.\\
\end{split}
\end{equation}
Here $\tilde{A}_s$ are constants only depending on $U$, $k$, and $n$.
We then apply $D^2 v= \bar{D}^2 V + b(x)L$ again. By the multi-linearity of $\Sigma_{k-1}(\cdot, ..., \cdot)$,
\begin{equation}
\begin{split}
&\displaystyle \sum_{s=0}^{l} \tilde{A}_s\cdot\int_M \Sigma_{k-1}(\overbrace{\bar{D}^2 v,...,\bar{D}^2 v}^{s},L,...,L) \cdot|\nabla\p| d\mu_M\\
=& \displaystyle \sum_{s=0}^{l} \tilde{b}_s(x)\cdot\int_M \Sigma_{k-1}(\overbrace{D^2 v,...,D^2 v}^{s},L,...,L) \cdot|\nabla\p| d\mu_M\\
= & \displaystyle \sum_{s=0}^{l} I_{k-1,s}^{(\tilde{b}_s(x))}(|\nabla \p|),
\end{split}
\end{equation}
where $\tilde{b}_s(x)$ are bounded functions with bounds only depending on $U$, $k$, and $n$.\\
By Proposition I,
\begin{equation}
\displaystyle \sum_{s=0}^{l} I_{k-1,s}^{(\tilde{b}_s(x))}(|\nabla \p|)\leq
\displaystyle \sum_{s=0}^{l} \tilde{A}_s I_{k-1,s}(|\nabla \p|).
\end{equation}
Here $\tilde{A}_s$ are positive constants which are different from the ones in (\ref{5.16}). But again they only depend on the bounds of $\tilde{b}_s(x)$, $n$ and $k$; thus they only depend on $U$, $k$, and $n$.
In conclusion,
\begin{equation}
\begin{split}
N_{k,l}^{(u)}(\p)\leq
  \displaystyle \sum_{s=0}^{l} \tilde{A}_s I_{k-1,s}(|\nabla \p|),
\end{split}
\end{equation}
for some $\tilde{A}_s$ only depending on $U$, $k$, and $n$.
This ends the proof of Proposition N.
\end{proof}

\textbf{\textit{Proposition K}}: \textit{For any bounded function $u(x)$, let us denote $\max_{x\in M} |u(x)|$ by $U$.
Then for any $i_0\geq 3$ and any function $\p$, there exist positive constants
$A_0,..., A_{i_0-3}$, and $\tilde{A}_0,..., \tilde{A}_{i_0-3}$ depending on $U$, $k$, and $n$, such that
\begin{equation}\label{K2}
\begin{split}
K_{k,i_0-3}^{(-1)}(\p)\leq \displaystyle \sum_{s=0}^{i_0-2} A_s I_{k,s}(|\p|)+\sum_{s=0}^{i_0-3} \tilde{A}_s I_{k-1,s}(|\nabla \p|).\\
\end{split}
\end{equation}
}
Before proving Proposition K, we first show the following two inequalities.
\begin{lemma}\label{k0}Let $v$ be a function on $M$ with $|\nabla v|\leq 1$.
For any integer $3\leq i_0\leq k $,
\begin{equation}\begin{split}\label{5.11}
K_{k,0}^{(\pm | \nabla v|^{i_0-3})}(\p) \leq &\sum_{s=0}^{1}A_s I_{k,s}(|\p|)+ \tilde{A}_0 I_{k-1,0}(|\nabla \p|)\\
=&A_0 I_{k,0}(|\p|) + A_1 I_{k,1}(|\p|)+ \tilde{A}_0 I_{k-1,0}(|\nabla \p|), \quad \mbox{when $i_0$ is odd};
\end{split}
\end{equation}
and
\begin{equation}\begin{split}\label{5.12}
K_{k,1}^{(\pm|\nabla v|^{i_0-4})}(\p)\leq &\sum_{s=0}^{2}A_s I_{k,s}(|\p|)+ \sum_{s=0}^{1}\tilde{A}_s I_{k-1,s}(|\nabla \p|), \quad \mbox{when $i_0$ is even}.\\
\end{split}
\end{equation}
\end{lemma}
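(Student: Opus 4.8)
The plan is to prove both inequalities by one and the same integration‑by‑parts mechanism; I will describe it for the $l=0$ case (odd $i_0$) and then record the single extra term that appears in the $l=1$ case (even $i_0$). The starting point is the identity $v_{mi}v_m=\tfrac{1}{2}\partial_i(|\nabla v|^2)$, so that, writing $u=\varepsilon|\nabla v|^{i_0-3}$ with $\varepsilon=\pm1$,
$$K_{k,0}^{(u)}(\p)=\tfrac{1}{2}\int_M [T_{k-1}]_{ij}(L)\,\partial_i(|\nabla v|^2)\,v_j\,u\,\p\,d\mu_M.$$
Since $L\in\Gamma_{k+1}^+$, the tensor $[T_{k-1}]_{ij}(L)$ is divergence‑free in $i$ by Lemma \ref{divT}, and since $\p\in C^\infty_c(U)$ there is no boundary term; integrating by parts in $i$ moves $\partial_i$ off $|\nabla v|^2$, and the resulting derivative then falls on each of the three remaining factors $v_j$, $u$, $\p$. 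The $v_j$ term is $-\tfrac{1}{2}\int_M|\nabla v|^2\,\Sigma_k(D^2v,L,\dots,L)\,u\,\p=I_{k,1}^{(-\frac12|\nabla v|^2u)}(\p)$, which Proposition I bounds by $\sum_{s\le1}A_sI_{k,s}(|\p|)$ since $\tfrac{1}{2}|\nabla v|^2|u|\le1$. The $\p$ term is $-\tfrac{1}{2}\int_M|\nabla v|^2\,[T_{k-1}]_{ij}(L)\,v_j\,\p_i\,u=N_{k,0}^{(-\frac12|\nabla v|^2u)}(\p)$, which Proposition N bounds by $\tilde A_0I_{k-1,0}(|\nabla\p|)$.

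The third, $u$‑term is where the parity is used: because $i_0$ is odd, $|\nabla v|^{i_0-3}=(|\nabla v|^2)^{(i_0-3)/2}$ is a polynomial in $|\nabla v|^2$, hence $C^2$ (using only $v\in C^3$), and $\partial_i u=\varepsilon(i_0-3)|\nabla v|^{i_0-5}v_{mi}v_m$. Substituting, the $u$‑term equals $-\tfrac{\varepsilon(i_0-3)}{2}\int_M|\nabla v|^{i_0-3}[T_{k-1}]_{ij}(L)v_{mi}v_mv_j\,\p=-\tfrac{i_0-3}{2}K_{k,0}^{(u)}(\p)$, i.e. a copy of the original integral with a fixed numerical coefficient. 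Bringing it to the left gives $\tfrac{i_0-1}{2}K_{k,0}^{(u)}(\p)=I_{k,1}^{(-\frac12|\nabla v|^2u)}(\p)+N_{k,0}^{(-\frac12|\nabla v|^2u)}(\p)$; since $\tfrac{i_0-1}{2}>0$ for $i_0\ge3$, dividing through and absorbing the constant into the $A_s,\tilde A_s$ yields \eqref{5.11}.

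For \eqref{5.12} I would run the identical argument on $K_{k,1}^{(\varepsilon|\nabla v|^{i_0-4})}(\p)$ — with $i_0$ even, $|\nabla v|^{i_0-4}$ is again a polynomial in $|\nabla v|^2$ — the only new feature being that $[T_{k-1}]_{ij}(D^2v,L,\dots,L)$ is no longer divergence‑free. By Lemma \ref{divTD2v} its $i$‑divergence is $[T_{k-1}]_{ij}(L)L_{mi}v_m$, so the integration by parts now produces a fourth term $-\tfrac{1}{2}\int_M|\nabla v|^2[T_{k-1}]_{ij}(L)L_{mi}v_mv_j\,u\,\p$; rewriting $[T_{k-1}]_{ij}(L)L_{mi}=\sigma_k(L)\delta_{mj}-[T_k]_{mj}(L)$ by \eqref{T} splits it into $-\tfrac{1}{2k}I_{k,0}^{(|\nabla v|^4u)}(\p)$ (using $\sigma_k(L)=\tfrac{1}{k}\Sigma_k(L,\dots,L)$ and Proposition I) plus $\tfrac{1}{2}J_{k,0}^{(|\nabla v|^2u)}(\p)$ (Proposition J), both absorbed into $\sum_{s\le2}A_sI_{k,s}(|\p|)$. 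The other three terms are exactly as before: an $I$‑term $I_{k,2}^{(-\frac12|\nabla v|^2u)}(\p)$ (Proposition I, contributing $\sum_{s\le2}A_sI_{k,s}(|\p|)$), an $N$‑term $N_{k,1}^{(-\frac12|\nabla v|^2u)}(\p)$ (Proposition N, contributing $\sum_{s\le1}\tilde A_sI_{k-1,s}(|\nabla\p|)$), and a self‑referential copy $-\tfrac{i_0-4}{2}K_{k,1}^{(u)}(\p)$; moving the last to the left turns its coefficient into $\tfrac{i_0-2}{2}>0$ for $i_0\ge4$, and dividing gives \eqref{5.12}.

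The step I expect to be the crux is the bookkeeping around the self‑referential term: one must verify that the derivative of $u$ regenerates precisely the same $K$‑type integrand — this is exactly why $u$ is taken to be a power of $|\nabla v|^2$ rather than of $|\nabla v|$, which simultaneously makes $u$ smooth so that the integration by parts is legitimate even where $\nabla v=0$ — and that the coefficient $1+\tfrac{i_0-3}{2}$ (resp. $1+\tfrac{i_0-4}{2}$) is a genuine positive constant, so it can be absorbed without reversing the inequality. Apart from this, the argument uses only that $|\nabla v|\le1$ and $|b(x)|\le1$, which keep every auxiliary coefficient bounded by $1$ so that Propositions I, J and N apply with constants depending only on $n$ and $k$, together with $v\in C^3$, which is the exact regularity needed for the one integration by parts (it involves at most one derivative of $D^2v$, supplied by Lemma \ref{divTD2v}).
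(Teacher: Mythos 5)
Your proof is correct and in substance the same as the paper's: both recognize that $v_{mi}v_m$ is a gradient, integrate by parts once (using Lemma~\ref{divT} for $K_{k,0}$ and Lemma~\ref{divTD2v} for $K_{k,1}$), and bound the resulting $I$-, $J$-, $N$-type pieces by Propositions~I, J, N. The only organizational difference is that the paper folds the whole weight into one antiderivative, writing $v_{mi}v_m\,|\nabla v|^{i_0-3}=\tfrac{1}{i_0-1}\partial_i\bigl(|\nabla v|^{i_0-1}\bigr)$ so that no self-referential $K$-term ever appears, whereas you pull out only $\tfrac12\partial_i(|\nabla v|^2)$ and then solve the resulting identity for $K$; the coefficients $\tfrac{i_0-1}{2}$ and $\tfrac{i_0-2}{2}$ you obtain are exactly the reciprocals of the paper's $\tfrac{1}{i_0-1}$ and $\tfrac{1}{i_0-2}$, so the two derivations are arithmetically identical.
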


\begin{proof}
To prove (\ref{5.11}) when $i_0$ is odd, we first write
\begin{equation}
\begin{split}
K_{k,0}^{(\pm|\nabla v|^{i_0-3})}(\p):=&\displaystyle \pm\int_{M} [T_{k-1}]_{ij}(L,...,L)v_{mi}v_jv_m |\nabla v|^{i_0-3} \p d\mu_M\\
=&\displaystyle \pm\int_{M}[T_{k-1}]_{ij}(L,...,L)\frac{1}{i_0-1}v_j(|\nabla v|^{i_0-1})_{i} \p d\mu_M\\
=&\displaystyle \mp\int_{M}([T_{k-1}]_{ij}(L,...,L) )_i \frac{1}{i_0-1}v_j |\nabla v|^{i_0-1}\p d\mu_M \\
 &\displaystyle \mp\int_{M}[T_{k-1}]_{ij}(L,...,L) \frac{1}{i_0-1}v_{ij} |\nabla v|^{i_0-1} \p d\mu_M\\
 &\displaystyle \mp \int_{M}[T_{k-1}]_{ij}(L,...,L) \frac{1}{i_0-1}v_j \p_i |\nabla v|^{i_0-1} d\mu_M. \\
\end{split}
\end{equation}
Notice that by Lemma \ref{divT},
\begin{equation}
([T_{k-1}]_{ij}(L,...,L) )_i=0.
\end{equation}\\
So we only need to estimate the rest two terms. First of all,
\begin{equation}\begin{split}
\displaystyle
&\mp\int_{M}[T_{k-1}]_{ij}(L,...,L) \frac{1}{i_0-1}v_{ij} |\nabla v|^{i_0-1} \p d\mu_M\\
=&\mp\frac{1}{i_0-1}\int_{M}\Sigma_{k}(D^2v,L,...,L) |\nabla v|^{i_0-1} \p d\mu_M\\
= &\mp \frac{1}{i_0-1}I_{k,1}^{(|\nabla v|^{i_0-1})}(\p) ,\\
\end{split}\end{equation}
by the definition of $I_{k,l}^{(u)}(\p)$ in (\ref{i2}).
Now by the $I$-type estimate proved in Proposition I,
\begin{equation}
\begin{split}
\mp \frac{1}{i_0-1}I_{k,1}^{(|\nabla v|^{i_0-1})}(\p)\leq\sum_{s=0}^{1}A_s I_{k,s}(|\p|)
=A_0 I_{k,0}(|\p|)+ A_1 I_{k,1}(|\p|),
\end{split}
\end{equation} for some constants $A_s$ depending only on $n$ and $k$.

Another term  $\mp \displaystyle \int_{M}[T_{k-1}]_{ij}(L,...,L) \frac{1}{i_0-1}v_j \p_i |\nabla v|^{i_0-1} d\mu_M$
is an $N$-type term. In fact,
\begin{equation}\begin{split}
\displaystyle \mp \int_{M}[T_{k-1}]_{ij}(L,...,L) \frac{1}{i_0-1}v_j \p_i |\nabla v|^{i_0-1} d\mu_M
 =\displaystyle \mp\frac{1}{i_0-1} N_{k,0}^{(|\nabla v|^{i_0-1})}(\p).
\end{split}
\end{equation}
Therefore by Proposition N, this term is bounded by
\begin{equation}
\tilde{A}_0 I_{k-1,0}(|\nabla \p|)=\tilde{A}_0 \displaystyle \int_{M} \sigma_{k-1}(L)|\nabla \p|d\mu_M.
\end{equation}
The estimates of these two terms lead to
\begin{equation}
\begin{split}
K_{k,0}^{(\pm |\nabla v|^{i_0-3})}(\p)\leq &\sum_{s=0}^{1}A_s I_{k,s}(|\p|)+ \tilde{A}_0  I_{k-1,0}(|\nabla \p|)\\
=&A_0 I_{k,0}(|\p|) + A_1 I_{k,1}(|\p|)+ \tilde{A}_0 I_{k-1,0}(|\nabla \p|).\\
\end{split}
\end{equation}
This finishes the proof of (\ref{5.11}).\\

To prove (\ref{5.12}) when $i_0$ is even, we write
\begin{equation}\label{5.13}
\begin{split}
K_{k,1}^{(\pm |\nabla v|^{i_0-4})}(\p):=&\pm \displaystyle \int_{M} [T_{k-1}]_{ij}(D^2v, L,...,L)v_{mi}v_jv_m  |\nabla v|^{i_0-4}
\p d\mu_M\\
=&\pm \displaystyle \int_{M}[T_{k-1}]_{ij}(D^2v, L,...,L)\frac{1}{i_0-2}v_j (|\nabla v|^{i_0-2})_i \p d\mu_M\\
=&\mp \displaystyle \int_{M}([T_{k-1}]_{ij}(D^2v, L,...,L) )_i \frac{1}{i_0-2}v_j  |\nabla v|^{i_0-2}\p d\mu_M\\ &\mp\int_{M}[T_{k-1}]_{ij}(D^2v, L,...,L) \frac{1}{i_0-2}v_{ij}  |\nabla v|^{i_0-2}\p d\mu_M\\
&\mp\int_{M}[T_{k-1}]_{ij}(D^2v, L,...,L) \frac{1}{i_0-2}v_{j} \p_i |\nabla v|^{i_0-2} d\mu_M.\\
\end{split}
\end{equation}
For the first term in the last equality of (\ref{5.13})
$$\mp \int_{M}([T_{k-1}]_{ij}(D^2v, L,...,L) )_i \frac{1}{i_0-2}v_j |\nabla v|^{i_0-2} \p d\mu_M,$$
we recall Lemma \ref{divTD2v}
$$([T_{k-1}]_{ij}(D^2v, L,...,L) )_i= -[T_{k-1}]_{ij}(L)L_{mi} v_m. $$
Thus \begin{equation}\label{5.14}
\begin{split}&\mp\int_{M}([T_{k-1}]_{ij}(D^2v, L,...,L) )_i\frac{1}{i_0-2} v_j |\nabla v|^{i_0-2} \p d\mu_M\\
=& \pm\frac{1}{i_0-2}\int_{M}[T_{k-1}]_{ij}(L)L_{mi} v_l v_m |\nabla v|^{i_0-2}\p d\mu_M.\\
\end{split}
\end{equation}
By formula (\ref{T}), and the definition of $I_{k,l}^{(u)}(\p)$, $J_{k,l}^{(u)}(\p)$ in (\ref{i2}), (\ref{j2})
\begin{equation}\label{5.15}
\begin{split}
&\pm\frac{1}{i_0-2}\int_{M}[T_{k-1}]_{ij}(L,...,L)L_{mi} v_m v_j |\nabla v|^{i_0-2}\p d\mu_M\\=&\displaystyle \int_{M}\{\pm C_1\Sigma_{k}(L,...,L)\delta_{jl}\mp C_2[T_{k}]_{jl}(L,...,L)\} v_l v_j |\nabla v|^{i_0-2}\p d\mu_M\\
=& \int_{M}\pm C_1\Sigma_{k}(L,...,L)|\nabla v|^{i_0}\p \mp C_2[T_{k}]_{jl}(L,...,L) v_l v_j |\nabla v|^{i_0-2}\p d\mu_M\\
=& \pm C_1 I_{k,0}^{(|\nabla v|^{i_0})}(\p)\mp C_2 J_{k,0}^{(|\nabla v|^{i_0-2})}(\p),\\
\end{split}
\end{equation}
where $C_1$, $C_2$ are positive constants depending only on $n$ and $k$.
Notice $|\nabla v|\leq 1$; thus by Proposition I and Proposition J, the $I$-type term $\pm C_1I_{k,0}^{(|\nabla v|^{i_0})}(\p)$ and the $J$-type term
$\mp  C_2 J_{k,0}^{(|\nabla v|^{i_0-2})}(\p)$ are both bounded by $\displaystyle A_0 I_{k,0}(|\p|) $ for some positive constants $A_0$, namely
\begin{equation}\label{5.17}\pm C_1I _{k,0}^{(|\nabla v|^{i_0})}(\p) \mp C_2 J_{k,0}^{(|\nabla v|^{i_0-2})}(\p) \leq A_0 I_{k,0}(|\p |).\\\end{equation}
By (\ref{5.14})-(\ref{5.17}), we get $$\mp \int_{M}([T_{k-1}]_{ij}(D^2v, L,...,L) )_i\frac{1}{i_0-2} v_j |\nabla v|^{i_0-2}\p d\mu_M\leq A_0 I_{k,0}(|\p |).$$ This completes the estimate of the term $\mp \int_{M}([T_{k-1}]_{ij}(D^2v, L,...,L) )_i \frac{1}{i_0-2}v_j |\nabla v|^{i_0-2}\p d\mu_M$ in (\ref{5.13}).\\
Next we need to estimate the second term in the last equality of (\ref{5.13}).
Notice
\begin{equation}
\begin{split}\displaystyle
&\mp\int_{M}[T_{k-1}]_{ij}(D^2v, L,...,L) \frac{1}{i_0-2}v_{ij} |\nabla v|^{i_0-2} \p d\mu_M\\
=&\mp \frac{1}{i_0-2}\int_{M}\Sigma_{k}(D^2v,D^2 v, L,...,L) |\nabla v|^{i_0-2}\p d\mu_M\\
=&\mp \frac{1}{i_0-2}I_{k,2}^{(|\nabla v|^{i_0-2})}(\p),\\
\end{split}
\end{equation}
by the definition of $I_{k,l}^{(u)}(\p)$ in (\ref{i2}).
Thus by the $I$-type estimate proved in Proposition I, $$ \mp\frac{1}{i_0-2}I_{k,2}^{(|\nabla v|^{i_0-2})}(\p) \leq \displaystyle \sum_{s=0}^{2}A_s I_{k,s}(|\p|).$$
Finally we estimate the last term $\mp\int_{M}[T_{k-1}]_{ij}(D^2v, L,...,L) \frac{1}{i_0-2}v_{j} \p_i |\nabla v|^{i_0-2} d\mu_M
$ in (\ref{5.13}).
\begin{equation}
\begin{split}
\mp\int_{M}[T_{k-1}]_{ij}(D^2v, L,...,L) \frac{1}{i_0-2}v_{j} \p_i |\nabla v|^{i_0-2} d\mu_M=\mp \frac{1}{i_0-2} N_{k, 1}^{(|\nabla v|^{i_0-2})}(\p).\\
\end{split}
\end{equation}
Thus by Proposition N, this is bounded by $\sum_{s=0}^{1}\tilde{A}_s I_{k-1,s}(|\nabla \p|)$.
By the estimates of the three terms in (\ref{5.13}), we conclude that
$$K_{k,1}^{(\pm|\nabla v|^{i_0-4})}(\p)\leq \sum_{s=0}^{2}A_s I_{k,s}(|\p|)+ \sum_{s=0}^{1}\tilde{A}_s I_{k-1,s}(|\nabla \p|).$$
\end{proof}

\begin{proof} of Proposition K:
If $i_0=3$ or $4$, $K_{k,i_0-3}^{(-1)}(\p)$ is equal to either $K_{k,0}^{(-1)}(\p)$ or $K_{k,1}^{(-1)}(\p)$.
The estimates of these two terms have already been proved in inequality (\ref{5.11}) with $i_0=3$
and (\ref{5.12}) with $i_0=4$ respectively; thus we assume $i_0\geq 5$ from now on.
To estimate the $K$-type terms $K_{k,i_0-3}^{(-1)}(\p)$ for $i_0\geq 5$, we first apply a similar argument as the one to derive formula (154) in \cite{CW}. This implies
\begin{equation}\begin{split}\label{5.9}
K_{k,i_0-3}^{(-1)}(\p) =& \frac{1}{2}I_{k,i_0-2}^{(|\nabla v|^2)}(\p)-C_1 I_{k,i_0-4}^{(|\nabla v|^4)}(\p)
+C_2 J_{k,i_0-4}^{(|\nabla v|^2)}(\p)\\
&+ C_3 K_{k,i_0-5}^{(|\nabla v|^2)}(\p)+ \frac{1}{2} N_{k,i_0-3}^{(|\nabla v|^2)}(\p).\\
\end{split}
\end{equation}
Here $C_1$, $C_2$,  $C_3$ are positive constants depending only on $n$ and $k$.
For detailed steps, one can refer to the similar argument (156)-(161) present in \cite{CW}.
By Proposition I, Proposition J, and Proposition N, there exist positive constants $A_s$ and $\tilde{A}_s$ for
$s=0,...,i_0-3$ depending only on $k$, $n$, $C_1$, $C_2$ and $\max_{x\in M} |\nabla v(x)|\leq 1$, thus depending
only on $n$ and $k$, such that
\begin{equation}
\frac{1}{2}I_{k,i_0-2}^{(|\nabla v|^2)}(\p)\leq \sum_{s=0}^{i_0-2} A_s I_{k,s}(|\p|).
\end{equation}
\begin{equation}
-C_1 I_{k,i_0-4}^{(|\nabla v|^4)}(\p)\leq \sum_{s=0}^{i_0-4} A_s I_{k,s}(|\p|).
\end{equation}

\begin{equation}
C_2 J_{k,i_0-4}^{(|\nabla v|^2)}(\p)\leq \sum_{s=0}^{i_0-4} A_s I_{k,s}(|\p|).
\end{equation}

\begin{equation}
\displaystyle \frac{1}{2} N_{k,i_0-3}^{(|\nabla v|^2)}(\p)\leq \sum_{s=0}^{i_0-3} \tilde{A}_s I_{k-1,s}(|\nabla \p|).
\end{equation}

Here $A_s$ in each inequality may be different.
By these inequalities, (\ref{5.9}) is deduced to
\begin{equation}\begin{split}\label{5.6}
K_{k,i_0-3}^{(-1)}(\p) \leq &\sum_{s=0}^{i_0-2} A_s I_{k,s}(|\p|)
 + \sum_{s=0}^{i_0-3} \tilde{A}_s I_{k-1,s}(|\nabla \p|) + C_3 K_{k,i_0-5}^{(|\nabla v|^2)}(\p).\\
\end{split}
\end{equation}

The argument stops if either $i_0-5= 0$ or $i_0-5=1$; otherwise we perform similar arguments to
$K_{k,i_0-5}^{(|\nabla v|^2)}(\p)$ to get
\begin{equation}\begin{split}\label{5.7}
K_{k,i_0-5}^{(|\nabla v|^2)}(\p) \leq \sum_{s=0}^{i_0-4} A_s I_{k,s}(|\p|)
 + \sum_{s=0}^{i_0-5} \tilde{A}_s I_{k-1,s}(|\nabla \p|) + C_3 K_{k,i_0-7}^{(-|\nabla v|^4)}(\p).
\end{split}
\end{equation}
We remark here that the constant $C_3$ in (\ref{5.7}) may be different from the one in (\ref{5.6}). But
they are both positive constants depending only on $n$ and $k$, so we use the same notation when
it is not necessary to distinguish them.

Such an inductive argument will stop at the $q$-th step, where $q=\frac{[i_0-3]}{2}$. If $i_0$
is odd, then when the induction stops we get
\begin{equation}
\begin{split}
K_{k,i_0-3}^{(-1)}(\p) \leq &\sum_{s=0}^{i_0-2} A_s I_{k,s}(|\p|)
 + \sum_{s=0}^{i_0-3} \tilde{A}_s I_{k-1,s}(|\nabla \p|) \\
 &+ C_3 K_{k,0}^{((-1)^{\frac{i_0-1}{2}}\cdot |\nabla v|^{i_0-3})}(\p).\\
 \end{split}
\end{equation}
\noindent If $i_0$ is even, then when the induction stops we get
\begin{equation}
\begin{split}
K_{k,i_0-3}^{(-1)}(\p) \leq &\sum_{s=0}^{i_0-2} A_s I_{k,s}(|\p|)
 + \sum_{s=0}^{i_0-3} \tilde{A}_s I_{k-1,s}(|\nabla \p|)\\
  &+ C_3 K_{k,1}^{((-1)^{\frac{i_0-2}{2}}\cdot |\nabla v|^{i_0-4})}(\p).\\
  \end{split}
\end{equation}
By inequalities (\ref{5.11}) and (\ref{5.12}) with the inductive formula (\ref{5.7}),
we conclude that
$$K_{k,i_0-3}^{(-1)}\leq \sum_{s=0}^{i_0-2}A_s I_{k,s}(|\p|)+\sum_{s=0}^{i_0-3}\tilde{A}_s I_{k-1,s}(|\nabla \p|).$$
This finishes the proof of Proposition K.
\end{proof}

We are now ready to apply these four types of estimates to show (\ref{a}) for $m=i_0$.
With Proposition I, J and K, and the inductive formula (\ref{5.2}), we obtain
\begin{equation}
\begin{split}
\displaystyle I _{k,i_0}(\p):=&\int_M\Sigma_k(\overbrace{D^2v,..., D^2v}^{i_0},L,...,L)\p d\mu_M\\
\leq & \displaystyle \sum_{s=0}^{i_0-2}A_s\int_M \Sigma_k(\overbrace{D^2v,..., D^2v}^s,L,...,L)|\p|d\mu_M\\
 &+ \displaystyle \sum_{s=0}^{i_0-1}\tilde{A}_s\int_M \displaystyle \Sigma_{k-1}(\overbrace{D^2v,..., D^2v}^s,L,...,L)|\nabla \p| d\mu_M. \\
\end{split}
\end{equation}
The first sum above is equal to
$$ \sum_{s=0}^{i_0-2}A_s I_{k,s}(|\p|); $$
and the second sum is equal to
$$  \sum_{s=0}^{i_0-1} \tilde{A}_s I_{k,s}(|\nabla \p|).$$
As the index $s$ has dropped below $i_0$, both sums can be bounded by using the inductive
assumption, i.e. (\ref{5.8}) holds for $1\leq m\leq i_0-1$ and any function $\p$.
Therefore we have
\begin{equation}
\begin{split}
&\displaystyle \int_M\Sigma_k(\overbrace{D^2v,..., D^2v}^{i_0},L,...,L)\p d\mu_M\\
\leq &\displaystyle \sum_{s=0}^{i_0-2}A_s I_{k,s}(|\p|)+\sum_{s=0}^{i_0-1} \tilde{A}_s I_{k,s}(|\nabla \p|)\\
\leq & C \displaystyle \int_M (\sigma_k(L)|\p| + \sigma_{k-1}(L)|\nabla \p|+\cdots +
+|\nabla^k \p|)d\mu_M,
\end{split}
\end{equation}
where $C$ depends only on $n$ and $k$.
This is the conclusion that we aim to prove in this section.

\end{document}